\def\NZQ{\mathbb}               
\def\ZZ{{\NZQ Z}}
\begin{document}

\title{Volumes of Line Bundles on Schemes}
\author{Steven Dale Cutkosky}
\author{Roberto N\'u\~nez}

\thanks{The first author was partially supported by NSF grant DMS-1700046.}

\address{Steven Dale Cutkosky, Department of Mathematics,
University of Missouri, Columbia, MO 65211, USA}
\email{cutkoskys@missouri.edu}

\address{Roberto N\'u\~nez, Department of Mathematics,
University of Missouri, Columbia, MO 65211, USA}
\email{rnhvb@mail.missouri.edu}

\begin{abstract}
Volumes of line bundles are known to exist as limits on generically reduced projective schemes. However, it is not known if they always exist as limits on more general projective schemes. We show that they do always  exist as a limit on a codimension one subscheme of a nonsingular projective variety.
\end{abstract}

\keywords{volume of line bundle, projective scheme}
\subjclass[2010]{14C40,  14C17}

\maketitle

\section{Introduction}\label{Intro}

Suppose that $\mathscr L$ is a line bundle on a $d$-dimensional proper scheme $X$ over a field $k$. The volume of $\mathscr L$ is defined to be the lim sup
\begin{equation}\label{eqN1}
vol(\mathscr{L}) = \limsup \limits_{n \to \infty} \dfrac{\dim_k\Gamma(X,  \mathscr{L}^{n})}{n^{d}/d!}.
\end{equation}
This volume is defined in \cite[Definition 2.2.26]{Laz01}. Many important properties of the volume are derived in this book. The most fundamental question is if this volume exists as a limit. The volume does in fact exist as a limit in many cases. The following  theorem appears in \cite{Cut01}. 
Let $\mathcal N_X$ be the nilradical of a proper scheme $X$.

\begin{theorem}\label{Theorem14a}(\cite[Theorem 10.7]{Cut01}) Suppose that $X$ is a proper scheme of dimension $d$ over a  field $k$ such that $\dim \mathcal N_X<d$ and $\mathscr L$ is a  line bundle on $X$.  Then  the limit
$$
\lim_{n\rightarrow \infty}\frac{\dim_k \Gamma(X,\mathscr L^n)}{n^d}
$$
exists, and so $\mbox{vol}(\mathscr L)$ exists as a limit.
\end{theorem}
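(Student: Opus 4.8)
The plan is to strip away the non-reduced and reducible structure of $X$ until only top-dimensional integral components remain, and then to handle each such component by the theory of Okounkov bodies. The recurring technical tool is the uniform bound that, for any coherent sheaf $\mathcal F$ on a proper scheme $X$ whose support has dimension $e$ and any line bundle $\mathscr L$, one has $\dim_k H^i(X,\mathscr L^n\otimes\mathcal F)=O(n^e)$ for every $i$. I would prove this by filtering $\mathcal F$ so that its successive quotients are sheaves on the reduced support, writing $\mathscr L$ as a difference $\mathcal O(A-B)$ of very ample divisors on a projective model, and comparing with the Hilbert polynomial of the ample part. Note that $\mathscr L$ is \emph{not} assumed ample, so a direct Hilbert-polynomial argument is unavailable, and this estimate is the real technical crux of the reduction steps.

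First I would reduce to the reduced scheme. Tensoring $0\to\mathcal N_X\to\mathcal O_X\to\mathcal O_{X_{\mathrm{red}}}\to 0$ by the locally free sheaf $\mathscr L^n$ and taking the long exact sequence in cohomology bounds $|\dim_k\Gamma(X,\mathscr L^n)-\dim_k\Gamma(X_{\mathrm{red}},\mathscr L^n|_{X_{\mathrm{red}}})|$ by $\dim_k\Gamma(X,\mathscr L^n\otimes\mathcal N_X)+\dim_k H^1(X,\mathscr L^n\otimes\mathcal N_X)$. Since $\dim\mathcal N_X<d$, the estimate above makes this difference $O(n^{d-1})$, so after dividing by $n^d$ the two limits coincide and it suffices to treat the reduced case.

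Next I would pass to the top-dimensional integral components. Writing $X_1,\dots,X_s$ for the irreducible components of $X_{\mathrm{red}}$, the natural map $\mathcal O_{X_{\mathrm{red}}}\to\bigoplus_i\mathcal O_{X_i}$ given by restriction to the components is injective with cokernel supported on the pairwise intersections $X_i\cap X_j$, which have dimension $\le d-1$. Discarding also the components with $\dim X_i<d$ (whose sections contribute only $O(n^{d-1})$), the same estimate gives $\dim_k\Gamma(X_{\mathrm{red}},\mathscr L^n|_{X_{\mathrm{red}}})=\sum_{\dim X_i=d}\dim_k\Gamma(X_i,\mathscr L^n|_{X_i})+O(n^{d-1})$. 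Thus everything reduces to a single integral proper variety $Y$ of dimension $d$ with a line bundle $\mathscr M$, and it is enough to prove that $\dim_k\Gamma(Y,\mathscr M^n)/n^d$ converges.

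For this final case I would fix a valuation $\nu$ of rank $d$ on the function field $k(Y)$ with residue field finite over $k$ (for instance the Abhyankar valuation attached to a flag of subvarieties through a regular point of a projective model of $Y$) and form the graded semigroup $\Gamma=\{(\nu(s),n):0\ne s\in\Gamma(Y,\mathscr M^n)\}\subset\ZZ^d\times\NN$. Because $\nu$ separates a $k$-basis of each $\Gamma(Y,\mathscr M^n)$, the dimension $\dim_k\Gamma(Y,\mathscr M^n)$ equals the number of lattice points in the $n$-th slice of $\Gamma$, and the theory of semigroups of lattice points (Okounkov, Kaveh--Khovanskii, Lazarsfeld--Musta\c{t}\u{a}) shows that this count divided by $n^d$ converges to the volume of the associated Okounkov body. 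I expect this integral case to be the main obstacle: one must verify that the semigroup meets the hypotheses of the lattice-point convergence theorem and dispose of the arithmetic subtleties arising when $k$ is not algebraically closed or $Y$ is merely proper rather than projective --- essentially the content of Fujita-type approximation --- whereas the two reduction steps, once the uniform $O(n^e)$ support bound is in hand, are routine.
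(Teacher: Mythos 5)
Your two reduction steps are correct and genuinely routine: tensoring $0\to\mathcal N_X\to\mathcal O_X\to\mathcal O_{X_{\mathrm{red}}}\to 0$ and $0\to\mathcal O_{X_{\mathrm{red}}}\to\bigoplus_i\mathcal O_{X_i}\to\mathcal C\to 0$ with $\mathscr L^n$ and using the $O(n^{e})$ cohomology bound for sheaves with $e$-dimensional support (which you need not reprove; it is \cite[Proposition 1.31]{Deb} or \cite{K}, exactly as this paper uses it) reduces everything to a line bundle on a $d$-dimensional proper variety. The genuine gap is in your final, integral step, and it is precisely the point that makes the theorem a theorem. Your key assertion --- that $\nu$ ``separates a $k$-basis'' of $\Gamma(Y,\mathscr M^n)$, so that $\dim_k\Gamma(Y,\mathscr M^n)$ \emph{equals} the number of lattice points in the $n$-th slice of the value semigroup --- is true only when the residue field of $\nu$ is $k$ itself. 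A flag of subvarieties through a regular closed point of $Y$ produces a valuation whose residue field is a finite extension $k'$ of $k$, and then one only gets the two-sided bound $\#S_n\le\dim_k\Gamma(Y,\mathscr M^n)\le[k':k]\,\#S_n$, which controls the limsup and liminf but does not yield existence of a limit. When $k$ is algebraically closed one can take the point to be $k$-rational, which is exactly why Okounkov, Lazarsfeld--Musta\c{t}\u{a} and Kaveh--Khovanskii all assume $k=\bar k$ (as this paper emphasizes); over an arbitrary field the regular locus of $Y$ may contain no point with residue field $k$ at all, so your proposed valuation cannot in general be chosen to make the count exact. The cited source \cite{Cut01} overcomes this by a refinement you would need to supply: writing $\dim_k L_n=\sum_{j\ge 1}\#S_n^{(j)}$, where $S^{(j)}$ is the set of $(\nu(f),n)$ at which the graded quotient of the valuation filtration has $k$-dimension at least $j$, checking that each $S^{(j)}$ is again a subsemigroup to which the cone method applies, and summing the resulting limits. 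Deferring this as ``arithmetic subtleties'' to be ``disposed of'' (and labelling it as the content of Fujita approximation, which it is not --- Fujita approximation is a different statement and also needs $k=\bar k$ in its classical form) leaves the actual content of the theorem unproved.

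A second, smaller gap: the lattice-point convergence theorems give $\lim_n \#S_{mn}/n^d$ only along multiples of the index $m$ of the semigroup (compare Theorem \ref{KIthm} in this paper, which is stated for $L_{mn}$), whereas Theorem \ref{Theorem14a} asserts a limit over \emph{all} $n$. For the complete linear series of a line bundle you must add the dichotomy: either $m=1$, or the limit along multiples of $m$ is $0$ (if $\mathscr L^{m}$ has positive volume one shows $h^0(Y,\mathscr L^{mn+j})\ne 0$ for every $j$ and all $n\gg 0$, forcing $m=1$; otherwise all the relevant limits vanish and the full limit is $0$). Your write-up never addresses the index, so even granting the algebraically closed case your argument as stated only produces a limit along an arithmetic progression.
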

The condition on the nilradical just means that the scheme is reduced at all generic points of $d$-dimensional components. 

An example of a line bundle on a projective variety where the limit in Theorem \ref{Theorem14a} is an irrational number is given in Example 4 of Section 7 of \cite{CS}.

Theorem \ref{Theorem14a} is proven for line bundles on a nonsingular variety over an algebraically closed field of characteristic zero by Lazarsfeld (Example 11.4.7 \cite{Laz01}) using Fujita approximation (Fujita \cite{Fuj}). This result is extended by Takagi \cite{T} using De Jong's theory of alterations \cite{DJ} to hold on nonsingular varieties over algebraically fields of all characteristics $p\ge 0$.

Theorem \ref{Theorem14a} has been proven by  Okounkov  \cite{Ok} for section rings of ample line bundles,  for section rings of big line bundles and for graded linear series by Lazarsfeld and Musta\c{t}\u{a} \cite{LazMus}  and Kaveh and Khovanskii \cite{KavKov} when $k$ is an algebraically closed field. A local form of this result is given by Fulger in \cite{Ful}. These last proofs use an ingenious ``cone method'' introduced by Okounkov to reduce  to a problem of
counting points in an integral semigroup. All of these proofs require the assumption that {\it $k$ is  algebraically closed}.  The proof of Theorem \ref{Theorem14a} also uses this wonderful cone method.

The cone method applies to graded linear series. 
Suppose that $X$ is a  proper scheme over a field $k$. A graded linear series on $X$ is a graded $k$-subalgebra $L=\oplus_{n\ge 0}L_n$ of a section ring $\oplus_{n\ge 0}\Gamma(X,\mathscr L^n)$ of a line bundle $\mathscr L$ on $X$. 

   The following theorem is a consequence of \cite[Theorem 1.4]{Cut01} and \cite[Theorem 10.3]{Cut01}.
 
\begin{theorem}\label{TheoremI1} Suppose that $X$ is a $d$-dimensional projective scheme over a field $k$ with 
$d>0$. Let $\mathcal N_X$ be the nilradical of $X$. Then the following are equivalent
\begin{enumerate}
\item[1)] For every  graded linear series $L$ on $X$ there exists a positive integer $r$ (depending on $L$) such that the limit
$$
\lim_{n\rightarrow\infty}\frac{\dim_k L_{rn}}{n^d}
$$
exists.
\item[2)] $\dim \mathcal N_X<d$.
\end{enumerate}
\end{theorem}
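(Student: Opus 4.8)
The plan is to establish the two implications separately, the engine in both cases being the translation of a graded linear series into a lattice-point counting problem for an integral subsemigroup of $\NN^{d+1}$, together with the results on such semigroups that underlie the cone method. Note first that $\dim \mathcal N_X \le d$ always holds, so condition (2) asserts exactly that $X$ is reduced at every generic point of a $d$-dimensional component; this is precisely the regime in which the Okounkov-body machinery behind Theorem \ref{Theorem14a} operates.

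For the implication $(2)\Rightarrow(1)$ I would argue as follows. Fix a graded linear series $L\subseteq \bigoplus_n \Gamma(X,\mathscr L^n)$. Choose a $d$-dimensional generic point $\eta$ at which $X$ is reduced, together with a flag (equivalently, a rank-$d$ valuation centered there) that refines $\mathscr L$ into a filtration whose successive quotients are one-dimensional over the residue field. This assigns to each nonzero section of $L_n$ a point of $\ZZ^d$, and the construction realizes $\dim_k L_n$, up to residue-field degree factors, as the weighted number of lattice points in the degree-$n$ slice $S_n$ of an integral semigroup $S\subseteq \NN^{d+1}$. The hypothesis $\dim \mathcal N_X<d$ is what guarantees that passing to this generically reduced top-dimensional model alters $\dim_k L_n$ by only $o(n^d)$: sections of $L_n$ killed on $X_{\mathrm{red}}$ are supported on $\mathcal N_X$ and hence number $O(n^{d-1})$. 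I would then invoke the semigroup counting theorem (the content of \cite[Theorem 1.4]{Cut01}, the heart of the cone method): there is a positive integer $r$, controlled by the index of the subgroup of $\ZZ^{d+1}$ generated by $S$ in the relevant slices, for which $\lim_{n\to\infty}\#S_{rn}/n^d$ exists. Transporting this back yields the existence of $\lim_{n\to\infty}\dim_k L_{rn}/n^d$, which is (1).

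For $(1)\Rightarrow(2)$ I would prove the contrapositive: assuming $\dim \mathcal N_X=d$, construct a single graded linear series $L$ for which $\dim_k L_{rn}/n^d$ fails to converge for every $r$. Since the nilradical has a $d$-dimensional component, there is a top-dimensional generic point $\eta$ at which $\mathcal O_{X,\eta}$ carries nonzero nilpotents; the thickening recorded by these nilpotents supplies degrees of freedom in the section rings that are absent on $X_{\mathrm{red}}$. The idea is to exploit this freedom to prescribe $L$ so that its dimension sequence realizes a deliberately pathological subsemigroup $S\subseteq \NN^{d+1}$, designed so that $\limsup_{n\to\infty} \#S_{rn}/n^d > \liminf_{n\to\infty} \#S_{rn}/n^d$ for every choice of $r$. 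Producing such a semigroup, and then exhibiting it as the dimension sequence of an honest graded linear series on the non-reduced scheme rather than merely as an abstract object, is exactly the construction of \cite[Theorem 1.4]{Cut01}.

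The main obstacle is this last construction. Realizing an oscillating semigroup inside the section ring of a line bundle requires using the nilpotent structure carefully: one must manufacture genuine global sections of $\mathscr L^n$ whose leading behaviour along the chosen flag hits precisely the prescribed lattice points, and one must ensure the oscillation survives passage to every arithmetic subsequence $rn$, not merely the full sequence. A secondary technical point, already present in $(2)\Rightarrow(1)$, is that $k$ is an arbitrary field, so the dictionary between $\dim_k L_n$ and lattice-point counts must carry residue-field degrees and the semigroup results must be applied in a form valid without the algebraically-closed hypothesis. This is precisely the generality in which \cite[Theorem 1.4]{Cut01} and \cite[Theorem 10.3]{Cut01} are established, and the asserted equivalence follows by combining them.
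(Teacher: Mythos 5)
The paper gives no argument for Theorem \ref{TheoremI1} at all: it is quoted as a consequence of \cite[Theorem 1.4]{Cut01} and \cite[Theorem 10.3]{Cut01}, and your final paragraph reduces to exactly the same two citations, so at the level of what is actually asserted your proposal and the paper coincide. Your sketch of 2) $\Rightarrow$ 1) is also a fair description of how that direction is proved in \cite{Cut01}: the kernel of $H^0(X,\mathscr L^n)\rightarrow H^0(X_{red},(\mathscr L|_{X_{red}})^n)$ is $H^0(X,\mathcal N_X\otimes\mathscr L^n)$, which is $O(n^{\dim \mathcal N_X})=O(n^{d-1})$ when $\dim\mathcal N_X<d$, and the image is a graded linear series on $X_{red}$ to which the cone method applies (modulo one point you gloss over: $X_{red}$ may have several $d$-dimensional components, so one cannot work with a single flag at one generic point; a further filtration argument is needed).

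However, your sketch of 1) $\Rightarrow$ 2) contains a genuine conceptual error. You propose to produce a subsemigroup $S\subseteq\NN^{d+1}$ such that $\#S_{rn}/n^d$ fails to converge for \emph{every} $r$, and then to realize $S$ by a graded linear series. No such semigroup can exist: the value sets attached to a graded linear series have slices bounded linearly in $n$, and for any such subsemigroup the very counting theorem you invoke in the other direction (Theorem \ref{KIthm}, i.e.\ the semigroup results of \cite{KavKov} and \cite{Cut01}) yields an index $r$ along which $\#S_{rn}/n^d$ \emph{does} converge. So your plan for this direction is self-defeating. The actual construction in \cite{Cut01} (going back to \cite{CS}) works in precisely the opposite way: it exploits the fact that on a scheme with $\dim\mathcal N_X=d$ the lattice-point sets attached to a graded linear series need \emph{not} be semigroups. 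Taking a suitable piece $\mathcal J$ of the nilradical with $d$-dimensional support whose square is supported in dimension $<d$, and taking $L_0=k$ and $L_n$ inside $H^0(X,\mathcal J\otimes\mathscr L^n)$, closure under multiplication imposes essentially no condition, because products of positive-degree elements fall into sections of $\mathcal J^2\otimes\mathscr L^{n+m}$, which contribute only $O(n^{d-1})$. One may therefore prescribe $\dim_k L_n$ almost arbitrarily, in particular oscillating in blocks so long that every arithmetic subsequence oscillates. This absence of multiplicative constraints --- not the realization of a pathological semigroup --- is the mechanism, and it is what the paper is signalling when it remarks that the constructed $L$ ``is not a section ring of a line bundle.''
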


The statement 1) $\Rightarrow$ 2) of Theorem \ref{TheoremI1} (\cite[Theorem 1.4]{Cut01}) is  established in \cite[Theorem 10.3]{CS}. It is shown that in  any proper $k$-scheme $X$  such that $\dim \mathcal N_X=d$ there exists a graded linear series $L$ which has the property that the limit of 1) does not exist (for any $r$). The construction in this proof  is  not a section ring of a line bundle. Thus we have the following interesting question.

\begin{question} Suppose that $X$ is a projective $d$-dimensional scheme over a field and $\mathscr L$ is a line bundle on $X$. Does the volume $\mbox{vol}(\mathscr L)$ exist as a limit?
\end{question}

This question has a positive answer if $\dim N_X<d$ by Theorem \ref{Theorem14a}.

In this paper we give a positive answer to this question for arbitrary codimension 1 subschemes of a nonsingular variety.

\begin{theorem} \label{thm: existence_limits_codimension1_in_Nonsingular}

Let $X$ be a nonsingular projective variety over an arbitrary field. Let $Y \subset X$ be a closed subscheme of codimension 1. Let $\mathscr{L}$ be an invertible sheaf on $Y$ and $d=\dim Y$. Then  the limit
$$
\lim_{n\rightarrow \infty}\frac{\dim_k \Gamma(Y,\mathscr L^n)}{n^d}
$$
exists, and so
$vol(\mathscr{L})$ exists as a limit.

\end{theorem}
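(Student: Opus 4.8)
The plan is to reduce to an effective Cartier divisor on $X$ and then to strip away the multiplicities one reduced layer at a time, so that each layer is governed by Theorem \ref{Theorem14a}.

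\emph{Reduction to a Cartier divisor.} Because $X$ is nonsingular, the local ring $\mathcal{O}_{X,\eta_i}$ at the generic point $\eta_i$ of each codimension one component $D_i$ of $Y$ is a discrete valuation ring; if $m_i$ denotes the length of $\mathcal{O}_{Y,\eta_i}$, then the effective Cartier divisor $D=\sum_i m_iD_i$ satisfies $\mathcal{I}_Y\subseteq\mathcal{O}_X(-D)$, with equality at each $\eta_i$. Thus $D$ is a closed subscheme of $Y$ and $\mathcal{F}:=\mathcal{O}_X(-D)/\mathcal{I}_Y$ is supported in dimension $\le d-1$. Tensoring $0\to\mathcal{F}\to\mathcal{O}_Y\to\mathcal{O}_D\to 0$ by $\mathscr{L}^n$ and invoking the standard estimate $\dim_kH^i(Z,\mathcal{G}\otimes\mathcal{M}^n)=O(n^{\dim\mathrm{Supp}\,\mathcal{G}})$, valid for any coherent $\mathcal{G}$ and any line bundle $\mathcal{M}$ on a projective scheme $Z$, gives
\[
\dim_k\Gamma(Y,\mathscr{L}^n)=\dim_k\Gamma(D,\mathscr{L}^n|_D)+O(n^{d-1}),
\]
so it suffices to treat the Cartier divisor $D$. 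I would also record a twist lemma: for a line bundle $\mathscr{G}$ on $X$, writing $\mathscr{G}\cong\mathcal{O}_X(F-G)$ with $F,G$ very ample and general, the sections of $F$ and $G$ restrict to nonzerodivisors on the Cohen--Macaulay scheme $D$, and comparing $(\mathscr{G}|_D)\otimes\mathscr{L}^n$ and $\mathscr{L}^n$ through multiplication by these sections yields $\dim_k\Gamma(D,(\mathscr{G}|_D)\otimes\mathscr{L}^n)=\dim_k\Gamma(D,\mathscr{L}^n)+O(n^{d-1})$, the cokernels being supported on $F\cap D$ and $G\cap D$.

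\emph{Stripping multiplicities.} Let $W=D_{\mathrm{red}}$ and let $\mathcal{J}$ be the ideal of $W$ in $D$. Nonsingularity of $X$ makes $\mathcal{J}\cong\mathcal{O}_X(-W)|_{D'}$ a line bundle on $D'=\sum_i(m_i-1)D_i$, a divisor of strictly smaller total multiplicity. The restriction maps form a graded $k$-algebra homomorphism from the section ring of $\mathscr{L}$ on $D$, whose image is a graded linear series $L=\bigoplus_nL_n$ on the reduced scheme $W$, and from $0\to\mathcal{J}\otimes\mathscr{L}^n\to\mathscr{L}^n\to\mathscr{L}^n|_W\to 0$ I obtain the exact count
\[
\dim_k\Gamma(D,\mathscr{L}^n)=\dim_kL_n+\dim_k\Gamma\bigl(D',(\mathcal{O}_X(-W)|_{D'})\otimes(\mathscr{L}|_{D'})^n\bigr).
\]
By the twist lemma and induction on the total multiplicity (the base case $D=W$ being Theorem \ref{Theorem14a}), the second term, divided by $n^d$, converges. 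Since $W$ is reduced, $\dim\mathcal{N}_W=0<d$, so Theorem \ref{TheoremI1} furnishes an integer $r$ with $\lim_n\dim_kL_{rn}/n^d$ convergent. Combining, $\lim_n\dim_k\Gamma(D,\mathscr{L}^{rn})/(rn)^d$ exists and equals a finite sum of volumes attached to the reduced divisors $\sum_{m_i>j}D_i$.

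\emph{The main obstacle.} Theorem \ref{TheoremI1} delivers convergence only along the progression $rn$, whereas the statement requires the full limit. To obtain it I would not use Theorem \ref{TheoremI1} for the image term but instead rerun, on the section ring $\bigoplus_n\Gamma(D,\mathscr{L}^n)$ of the genuine line bundle $\mathscr{L}$ on $D$, the semigroup method underlying Theorem \ref{Theorem14a}. Here nonsingularity of $X$ supplies, at each $\eta_i$, the chain ring $\mathcal{O}_{D,\eta_i}=\mathcal{O}_{X,\eta_i}/(t_i^{m_i})$ together with its nilpotent filtration by powers of a uniformizer $t_i$; I would build a rank $d$ valuation on each component by first reading off the $t_i$-adic layer (a value in $\{0,\dots,m_i-1\}$, which is precisely where the multiplicity is recorded) and then a rank $d-1$ flag valuation on the variety $D_i$. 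The associated leading-term map sends a basis of each $\Gamma(D,\mathscr{L}^n)$ to distinct lattice points, so that $\dim_k\Gamma(D,\mathscr{L}^n)$ equals the number of degree $n$ points of an integral semigroup. The crux---and the step I expect to be hardest---is to glue these local nilpotent filtrations across the several components into a single semigroup of full rank $d$ whose degree support is cofinite and which satisfies the hypotheses of the Okounkov--Khovanskii counting theorem; granting this, that theorem yields the full limit and closes the induction.
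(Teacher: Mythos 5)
Your reductions are sound and in fact run parallel to the paper's own: the passage from $Y$ to the Cartier divisor $D$ is \cref{lem: factorization_of_ideal_sheaves} combined with \cref{cor: existence_limits_isomorphism_away_from_closed_sets}; your twist lemma is an unconditional form of \cref{lem: tensoring_invertible_preserves_limit}; and your identification of the ideal of $W=D_{\mathrm{red}}$ in $D$ with the line bundle $\mathcal{O}_X(-W)|_{D'}$ is exactly the nilradical observation driving \cref{prop: exitance_limits_irreducible_codimension_1}. (A small caveat: over a finite field a ``general'' member of a very ample system need not avoid the associated points of $D$, so you need the power trick of \cref{lem: existence_nzd} rather than genericity.) But the last step is not a proof, and it is the heart of the theorem. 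Your exact count makes the existence of $\lim_n h^0(D,\mathscr{L}^n)/n^d$ \emph{equivalent} to the existence of $\lim_n \dim_k L_n/n^d$ for the restricted linear series $L$ on $W$, and, as you concede, the available result (\cref{TheoremI1}) only gives convergence along a progression $rn$. Your proposed fix --- rerunning the Okounkov semigroup method directly on the nonreduced scheme $D$ by combining the nilpotent $t_i$-adic layers with flag valuations on the $D_i$ --- is precisely what is not known how to do: on a nonreduced (and here also reducible) scheme, products of nonzero sections can vanish, so leading-term images need not form a semigroup and the degree support need not be cofinite; you explicitly grant yourself this (``granting this''), which is to assume the open difficulty away. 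Indeed this is why volumes on general nonreduced schemes remain open, and why the counterexamples of \cite{CS} for linear series exist at all.

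The paper closes this gap by a different device, \cref{prop: all_depends_on _volume_of_nilradical}, which is the missing idea. It first reduces to a single prime component $D=mE$ (via $\mathcal{O}_D\rightarrow \oplus_i\mathcal{O}_{a_iE_i}$, an isomorphism off the pairwise intersections, and \cref{lem: existence_limits_SES}), so that the reduction is irreducible --- essential for what follows. Then, with $\mathscr{A}$ ample and sections $\alpha\in H^0(\mathscr{A})$, $\beta\in H^0(\mathscr{A}\otimes\mathscr{L})$ not vanishing on the reduction, it splits into two cases. If some $\gamma\in H^0(\mathscr{A}^{-1}\otimes\mathscr{L}^{n_0})$ restricts to a nonzero section on the reduction, then $\alpha\gamma$ and $\beta\gamma$ are nonzero elements of $L_{n_0}$ and $L_{n_0+1}$ (here irreducibility of the reduction is used), forcing the index of $L$ to be $1$, so \cref{KIthm} gives the full limit for $\dim_k L_n/n^d$ and hence for $h^0(\mathscr{L}^n)/n^d$. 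If instead every such section vanishes on the reduction, then $H^0(\mathcal{N}\otimes\mathscr{A}^{-1}\otimes\mathscr{L}^n)=H^0(\mathscr{A}^{-1}\otimes\mathscr{L}^n)$ for all $n$; the left side has a limit by the inductive hypothesis on the nilradical (which your stripping step also establishes), and the hyperplane sequence for $\alpha$ gives $h^0(\mathscr{L}^n)=h^0(\mathscr{A}^{-1}\otimes\mathscr{L}^n)+O(n^{d-1})$. This dichotomy converts the index obstruction you ran into --- correctly identified, but left unresolved --- into two branches, each handled by results that actually exist; without it, or some substitute of comparable strength, your argument does not reach the conclusion.
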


Since the submission of this paper, N\'u\~nez \cite{N} has  proven that if $X$ is a projective variety whose nilradical $\mathcal N(X)$ satisfies $\mathcal N(X)^2=0$, then the volumes of all line bundles  on $X$ exist. 

\section{Notation and Terminology}

Let $X$ be a proper scheme of dimension $d$ over a field $\kappa$. Let $\mathscr{L}$ and $\mathcal{F}$ be an invertible sheaf and a coherent sheaf on $X$, respectively. We define the 
\textit{$\mathscr{L}$-volume of $\mathcal{F}$} to be

$$ vol_{\mathscr{L}}(\mathcal{F}) = \limsup \limits_{n \to \infty} \dfrac{h^{0}\left(X, \mathcal{F} \otimes \mathscr{L}^{n} \right)}{n^{d}/d!}$$
where $h^0(X,\mathcal F\otimes\mathscr L^n)=\dim_{\kappa}H^0(X,\mathcal F\otimes\mathscr L^n)$.

We say that the $\mathscr{L}$-volume of $\mathcal{F}$  exists as a limit if the limsup above is actually a limit. Observe that the 
$\mathscr{L}$-volume of $\mathcal O_X$ is the volume of $\mathscr{L}$.

 The volume of a graded linear series  $L$ (defined in Section \ref{Intro}) is:
$$vol(L) = \limsup \limits_{n \to \infty} \dfrac{\dim_{\kappa}L_n}{n^d/d!}. $$

The index of a graded linear series $L$ on a proper variety is defined to be  
\begin{equation}\label{eqN2}
m(L)=[\ZZ:G]
\end{equation}
 where $G$ is the subgroup of $\ZZ$ generated by $\{n\mid L_n\ne 0\}$.

The following theorem was proven in \cite{LazMus} for certain linear series on projective varieties over an algebraically closed field and in \cite{KavKov} for arbitrary linear series on projective varieties over algebraically closed fields.  The following theorem follows from \cite[Theorem 8.1]{CS}.

\begin{theorem}\label{KIthm}

Suppose that $X$ is a $d$-dimensional proper variety over a field $\kappa$, and $L$ is a graded linear series on $X$.
Let $m = m(L)$ be the index of $L$. Then

$$ \lim \limits_{n \to \infty} \dfrac{\dim_{\kappa}L_{mn}}{n^{d}}$$

\noindent exists.

\end{theorem}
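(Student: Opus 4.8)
The plan is to prove Theorem \ref{KIthm} by the cone method of Okounkov and Khovanskii, carried out \emph{directly} over $\kappa$; I avoid passing to $\overline{\kappa}$, since base change may destroy irreducibility or reducedness of $X$ and can even produce a nilradical of top dimension, so the known algebraically closed results would not apply. Write $K=\kappa(X)$, of transcendence degree $d$ over $\kappa$, let $\mathscr L$ be the ambient line bundle, and fix a nonzero rational section $\theta$ of $\mathscr L$. First I would choose a regular closed point $p\in X$ (the regular locus is open and dense, hence contains a closed point, and $\mathcal O_{X,p}$ is regular local of dimension $d$ with $\kappa(p)$ finite over $\kappa$). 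A regular system of parameters $u_1,\dots,u_d$ determines the associated flag valuation $\nu\colon K^\times\to\ZZ^d$, trivial on $\kappa$, with lexicographically ordered value group $\ZZ^d$ and residue field $\kappa_\nu=\kappa(p)$ of finite degree $e=[\kappa(p):\kappa]$; setting $\tilde\nu(s)=\nu(s/\theta^n)$ for $0\ne s\in\Gamma(X,\mathscr L^n)$ gives a function additive on products. The central object is the subsemigroup $S=\{(\tilde\nu(s),k)\mid k\ge 0,\ 0\ne s\in L_{mk}\}\subseteq\ZZ^d\times\NN$. Here the role of the index $m=m(L)$ is exactly that $\{n\mid L_n\ne 0\}$ generates $m\ZZ$, so $\{k\mid L_{mk}\ne 0\}$ is a numerical semigroup of index $1$ and the levels of $S$ generate $\ZZ$; this is what makes normalization by $k^d$ (rather than along a proper arithmetic progression) the correct one.

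Next I would pass to the associated graded. For each $k$, filtering the finite-dimensional space $L_{mk}$ by the value of $\tilde\nu$ produces leaves $\ell_{\gamma,k}$ which the leading-coefficient map embeds as $\kappa$-subspaces of $\kappa_\nu$, so that $a_k:=\dim_\kappa L_{mk}=\sum_{\gamma}\dim_\kappa\ell_{\gamma,k}$. Because $\nu$ is a valuation, leading terms multiply exactly, giving $\ell_{\gamma,k}\cdot\ell_{\gamma',k'}\subseteq\ell_{\gamma+\gamma',k+k'}$ inside the field $\kappa_\nu$, with products of nonzero leaf elements nonzero. Writing $S_k=\{\gamma\mid(\gamma,k)\in S\}$ and $b_k=\#S_k$, each nonzero leaf contributes at least $1$ and at most $e$, so $b_k\le a_k\le e\,b_k$. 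The purely combinatorial theorem of Okounkov and Khovanskii on subsemigroups of $\ZZ^d\times\NN$ (as used in \cite{LazMus} and \cite{KavKov}, and where no hypothesis on $\kappa$ enters) shows that $\lim_{k\to\infty}b_k/k^d$ exists, equal to the volume $V$ of the height-one slice of the cone spanned by $S$ (and $V=0$ in the degenerate case where that cone is not full-dimensional). I emphasize that $S$ need not be finitely generated, which is precisely the difficulty that rules out a naive Hilbert-polynomial argument.

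The main obstacle is the residue-field multiplicity: over a non-closed field the leaves $\ell_{\gamma,k}$ need not be one-dimensional, and the sandwich $b_k\le a_k\le e\,b_k$ has a genuine gap, so the mere existence of $\lim b_k/k^d$ does not settle $\lim a_k/k^d$. I would resolve this as follows. For $(\delta,j)\in S$, multiplication by any nonzero element of $\ell_{\delta,j}$ is an injective $\kappa$-linear map $\ell_{\gamma,k}\to\ell_{\gamma+\delta,k+j}$, so leaf dimensions are non-decreasing along $S$. Let $e^*=\max_{(\gamma,k)\in S}\dim_\kappa\ell_{\gamma,k}\le e$, attained at some $(\gamma_1,k_1)$. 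By maximality together with monotonicity, every leaf on the translate $(\gamma_1,k_1)+S$ has dimension exactly $e^*$, so $F=\{(\gamma,k)\mid\dim_\kappa\ell_{\gamma,k}=e^*\}$ is a nonempty ideal of $S$ containing $(\gamma_1,k_1)+S$. Comparing levels gives $\#(S_k\setminus F_k)\le\#S_k-\#S_{k-k_1}$, which is $o(k^d)$ because $\#S_k=Vk^d+o(k^d)$. Hence
\[
a_k=e^*\#F_k+\sum_{\gamma\in S_k\setminus F_k}\dim_\kappa\ell_{\gamma,k}=e^*\,b_k+o(k^d),
\]
and therefore $\lim_{k\to\infty}a_k/k^d=e^*\,V$ exists, which is exactly the assertion of the theorem.

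I expect the trickiest point to be the organization of the residue-field multiplicity argument in the third paragraph, namely isolating the maximal leaf dimension $e^*$ and showing that the locus where it is achieved is a semigroup ideal whose complement is asymptotically negligible; once this is in place, the existence of the limit follows formally from the Okounkov–Khovanskii semigroup theorem applied to $b_k$. The remaining inputs are standard: the existence of a regular closed point and of the flag valuation, the identity $\dim_\kappa L_{mk}=\sum_\gamma\dim_\kappa\ell_{\gamma,k}$, and the multiplicativity of leading terms under a valuation. The entire argument is insensitive to whether $L$ is finitely generated and never invokes algebraic closedness of $\kappa$, so it addresses exactly the generality of the statement.
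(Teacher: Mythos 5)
You should first know that the paper does not contain its own proof of Theorem \ref{KIthm}: it quotes the result from \cite[Theorem 8.1]{Cut01}, whose proof is by exactly the strategy you adopt --- the Okounkov cone method carried out directly over an arbitrary field, using a rank-$d$ lexicographic valuation built from a regular system of parameters at a regular closed point, with a separate device to account for the residue-field degree $e=[\kappa_\nu:\kappa]$. Your handling of that last point is the novel part of your write-up, and it checks out: multiplication by a nonzero leading term is injective on leaves, so leaf dimensions are non-decreasing along $S$; the locus $F$ where the maximal dimension $e^*$ is attained is an ideal of $S$ containing $(\gamma_1,k_1)+S$; and since translation by $(\gamma_1,k_1)$ injects $S_{k-k_1}$ into $F_k$, one gets $\#(S_k\setminus F_k)\le \#S_k-\#S_{k-k_1}=o(k^d)$ once $\lim_k \#S_k/k^d$ exists, whence $a_k=e^*\,b_k+o(k^d)$. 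This is a clean alternative to decomposing $\dim_\kappa L_{mk}=\sum_{i}\#\{\gamma:\dim_\kappa\ell_{\gamma,k}\ge i\}$ and treating each stratum as its own semigroup: your ideal argument shows all strata with $i\le e^*$ have the same asymptotics, so only the single limit for $S$ is ever needed.

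The genuine gap is in the second paragraph, where you invoke ``the purely combinatorial theorem of Okounkov and Khovanskii on subsemigroups of $\ZZ^d\times\NN$'' as if it applied to arbitrary subsemigroups. It does not, and as you state it the claim is false: $S=\{(a,k)\in\NN\times\NN : a\le k^2\}$ is a subsemigroup of $\ZZ\times\NN$ whose levels generate $\ZZ$, yet $\#S_k=k^2+1$, so $b_k/k\to\infty$. The theorem requires $S$ to be \emph{linearly bounded} --- contained in a pointed cone whose level-one slice is compact (conditions (2.3)--(2.4) of \cite{LazMus}; ``strong nonnegativity'' in \cite{KavKov}). Concretely, you must prove: after normalizing $\theta$ to be a local generator of $\mathscr L$ at $p$ (your arbitrary rational section can produce negative values, though only up to a linear shift in $k$), there is a constant $C$ with $\tilde\nu(s)\in[0,Ck]^d$ for all $0\ne s\in L_{mk}$. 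Finiteness of $S_k$ does come free from $\#S_k\le\dim_\kappa L_{mk}$, but the linear bound does not; it is a genuine geometric input, the one place where properness of $X$ is used beyond finite-dimensionality of sections, and it occupies a dedicated lemma in \cite{Cut01} (in the projective, algebraically closed setting it is done in \cite{LazMus} via intersection numbers along the flag). A secondary caveat: even with boundedness, your index normalization only guarantees that the levels of $S$ generate $\ZZ$; the full group $G(S)$ may be a proper sublattice of $\ZZ^{d+1}$, in which case $\lim b_k/k^d$ is the slice volume normalized to the lattice $G(S)\cap(\ZZ^d\times\{0\})$ rather than the Euclidean volume. Existence of the limit is unaffected (this is the general form in \cite{KavKov}), so this only corrects your formula $e^*V$, not the theorem; but the boundedness lemma must actually be supplied before your third paragraph can run.
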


 When $X$ is a variety, we define the rank of any coherent $\mathcal{O}_{X}$-module $\mathcal{F}$ to be the dimension of the $\mathcal{O}_{X,\eta}$-vector space $\mathcal{F}_{\eta}$, where $\eta$ is the generic point of $X$. 
 
 We will often call an invertible sheaf a line bundle.

\section{Preliminary Results} \label{sec: Preliminaries}

\subsection{Existence of Limits and Exact Sequences}

\begin{lemma}\label{lem: existence_limits_SES}

Let $X$ be a proper scheme of dimension $d$ over a field $\kappa$. Let $\mathscr{L}$ be an invertible sheaf on $X$. Let 

$$ 0 \rightarrow \mathcal{F}_{1} \rightarrow \mathcal{F}_{2} \rightarrow \mathcal{F}_{3} \rightarrow 0$$

\noindent be an exact sequence of coherent $\mathcal{O}_{X}$-modules. Let $\{1,2,3\} = \{i,j,k\}$. Suppose that the $\mathscr{L}$-volume of $\mathcal{F}_{i}$  exists as a limit and that $\mathcal{F}_{k}$ is supported on a closed subset of dimension strictly less than $\dim(X)$. Then the $\mathscr{L}$-volume of $\mathcal{F}_{j}$  exists as a limit as well. Moreover, 

$$ vol_{\mathscr{L}}(\mathcal{F}_{i})=vol_{\mathscr{L}}(\mathcal{F}_{j}) .$$

\end{lemma}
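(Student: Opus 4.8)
The plan is to twist the given short exact sequence by the locally free sheaf $\mathscr{L}^n$ and pass to the long exact sequence in cohomology, then show that the error terms contributed by $\mathcal{F}_k$ are negligible after dividing by $n^d/d!$. Since $\mathscr{L}^n$ is invertible, tensoring is exact, so for each $n$ I obtain
$$0 \to \mathcal{F}_1\otimes\mathscr{L}^n \to \mathcal{F}_2\otimes\mathscr{L}^n \to \mathcal{F}_3\otimes\mathscr{L}^n \to 0,$$
and hence the long exact cohomology sequence
$$0 \to H^0(\mathcal{F}_1\otimes\mathscr{L}^n)\to H^0(\mathcal{F}_2\otimes\mathscr{L}^n)\to H^0(\mathcal{F}_3\otimes\mathscr{L}^n)\to H^1(\mathcal{F}_1\otimes\mathscr{L}^n)\to\cdots.$$

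The key estimate I would establish first is that if $\mathcal{G}$ is a coherent $\mathcal{O}_X$-module whose support has dimension $e<d$, then $h^i(X,\mathcal{G}\otimes\mathscr{L}^n)=O(n^{e})$ for every $i\ge 0$; in particular $vol_{\mathscr{L}}(\mathcal{G})=0$. To prove this I would use dévissage: filter $\mathcal{G}$ by coherent subsheaves whose successive quotients are of the form $(\iota_V)_*\mathcal{N}$ for a coherent sheaf $\mathcal{N}$ on an integral closed subscheme $V\subseteq\operatorname{supp}(\mathcal{G})$ with $\dim V\le e$. By Grothendieck vanishing $H^i$ vanishes above the dimension of the support, while on each $V$ the Euler characteristic $\chi\bigl(V,\,\cdot\,\otimes(\mathscr{L}|_V)^n\bigr)$ is a numerical (Snapper) polynomial in $n$ of degree $\le\dim V\le e$; combined with the standard finiteness of the asymptotic cohomology functions this yields $h^i=O(n^{e})$ termwise. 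If one wishes to remain with a merely proper $X$, Chow's lemma reduces this bound to the projective case. This is the step I expect to be the main obstacle, since it is where the hypothesis $\dim\operatorname{supp}<\dim X$ is actually used and where all of the cohomological input is concentrated.

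Granting the estimate, I would finish by cases on the value of $k$. If $k=3$, the first three terms of the sequence give $h^0(\mathcal{F}_2\otimes\mathscr{L}^n)=h^0(\mathcal{F}_1\otimes\mathscr{L}^n)+r(n)$ with $0\le r(n)\le h^0(\mathcal{F}_3\otimes\mathscr{L}^n)=O(n^{d-1})$, so $h^0(\mathcal{F}_1\otimes\mathscr{L}^n)$ and $h^0(\mathcal{F}_2\otimes\mathscr{L}^n)$ differ by $O(n^{d-1})$; dividing by $n^d/d!$ and letting $n\to\infty$, the existence of either limit forces the existence of the other and their equality, covering both $(i,j)=(1,2)$ and $(i,j)=(2,1)$. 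If $k=1$, then $h^0(\mathcal{F}_1\otimes\mathscr{L}^n)$ and $h^1(\mathcal{F}_1\otimes\mathscr{L}^n)$ are both $O(n^{d-1})$, and the displayed four-term exactness shows that $h^0(\mathcal{F}_2\otimes\mathscr{L}^n)$ and $h^0(\mathcal{F}_3\otimes\mathscr{L}^n)$ differ by $O(n^{d-1})$, giving the claim for $\{i,j\}=\{2,3\}$. Finally, if $k=2$, then $\mathcal{F}_1$ and $\mathcal{F}_3$, being a subsheaf and a quotient of $\mathcal{F}_2$, are also supported in dimension $<d$, so both $\mathscr{L}$-volumes vanish by the estimate and the conclusion is immediate. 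In every case the discrepancy is $O(n^{d-1})=o(n^d)$, which is precisely what makes the two normalized limits agree.
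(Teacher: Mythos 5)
Your proposal is correct and is essentially the paper's own argument: the paper likewise tensors the sequence with $\mathscr{L}^n$, takes the long exact cohomology sequence, and invokes the fact that all $h^i(X,\mathcal{F}\otimes\mathscr{L}^n)$ grow like $o(n^d)$ when $\dim\operatorname{supp}(\mathcal{F})<d$, citing \cite[Proposition 1.31]{Deb} or \cite{K} for exactly the key estimate you identify as the crux (your d\'evissage sketch is the standard proof of that cited fact). Your explicit three-case bookkeeping is just the spelled-out version of what the paper leaves implicit.
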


The proof of this lemma follows from taking cohomology of the short exact sequence tensored with $\mathscr L^n$, and the fact that if $\mathcal F$ is  a coherent sheaf whose support has dimension less than $d$, then the limit
$$
\lim_{n\rightarrow 0}\frac{h^i(X,\mathcal F\otimes\mathscr L^n)}{n^d}=0
$$
for all $i$ (by \cite[Proposition 1.31]{Deb} or \cite{K}).

\begin{corollary} \label{cor: existence_limits_isomorphism_away_from_closed_sets}

Let $X$ be a proper scheme over a field $\kappa$. Let $\mathscr{L}$ be an invertible sheaf on $X$. Let 

$$ 0 \rightarrow \mathcal{K} \rightarrow \mathcal{F} \rightarrow \mathcal{G} \rightarrow \mathcal{C} \rightarrow 0$$

\noindent be an exact sequence of coherent $\mathcal{O}_{X}$-modules. Suppose that $\mathcal{K}$ and $\mathcal{C}$ are supported on closed subsets of dimension strictly less than $\dim(X)$. Then the $\mathscr{L}$-volume of $\mathcal{F}$  exists as a limit if and only if the $\mathscr{L}$-volume of $\mathcal{G}$   exists as a limit and then 

$$ vol_{\mathscr{L}}(\mathcal{F})=vol_{\mathscr{L}}(\mathcal{G}) .$$

\end{corollary}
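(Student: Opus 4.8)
The plan is to reduce the four-term exact sequence to two short exact sequences and then apply Lemma~\ref{lem: existence_limits_SES} twice. First I would split the given sequence
$$0 \rightarrow \mathcal{K} \rightarrow \mathcal{F} \rightarrow \mathcal{G} \rightarrow \mathcal{C} \rightarrow 0$$
at the middle by introducing the image sheaf $\mathcal{I} = \operatorname{im}(\mathcal{F} \rightarrow \mathcal{G})$, which is coherent since it is a subsheaf of the coherent sheaf $\mathcal{G}$ on a Noetherian scheme. This yields two short exact sequences of coherent $\mathcal{O}_X$-modules, namely
$$0 \rightarrow \mathcal{K} \rightarrow \mathcal{F} \rightarrow \mathcal{I} \rightarrow 0
\quad\text{and}\quad
0 \rightarrow \mathcal{I} \rightarrow \mathcal{G} \rightarrow \mathcal{C} \rightarrow 0,$$
where exactness at $\mathcal{I}$ in the second sequence uses that $\mathcal{I} = \ker(\mathcal{G} \rightarrow \mathcal{C})$ by exactness of the original sequence at $\mathcal{G}$.

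Next I would feed each short exact sequence into Lemma~\ref{lem: existence_limits_SES}. For the first sequence, the outer term $\mathcal{K}$ is supported on a closed subset of dimension strictly less than $\dim(X)$ by hypothesis, so taking $\mathcal{F}_i = \mathcal{F}_j$ to be $\mathcal{F}$ and $\mathcal{I}$ respectively and $\mathcal{F}_k = \mathcal{K}$, the lemma tells us that the $\mathscr{L}$-volume of $\mathcal{F}$ exists as a limit if and only if that of $\mathcal{I}$ does, and that the two volumes agree when they exist. Similarly, for the second sequence the outer term $\mathcal{C}$ is supported in dimension strictly less than $\dim(X)$, so the lemma gives that the $\mathscr{L}$-volume of $\mathcal{G}$ exists as a limit if and only if that of $\mathcal{I}$ does, with equal volumes.

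Chaining these two equivalences through the common term $\mathcal{I}$ then yields exactly the claim: the $\mathscr{L}$-volume of $\mathcal{F}$ exists as a limit if and only if the $\mathscr{L}$-volume of $\mathcal{G}$ does, and in that case
$$vol_{\mathscr{L}}(\mathcal{F}) = vol_{\mathscr{L}}(\mathcal{I}) = vol_{\mathscr{L}}(\mathcal{G}).$$
One subtlety I would want to confirm is that Lemma~\ref{lem: existence_limits_SES} is genuinely symmetric enough to run in the direction I need: the lemma as stated assumes the volume of one of the three terms exists and concludes about another, with the roles of $i,j,k$ allowed to be any assignment of $\{1,2,3\}$. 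Since I do not know a priori that any of $\mathcal{F}$, $\mathcal{G}$, or $\mathcal{I}$ has a volume that exists, I should read the lemma as an equivalence given the hypothesis on $\mathcal{F}_k$; the honest reading is that whichever of $\mathcal{F}_i$ or $\mathcal{F}_j$ is assumed to have a limiting volume forces the other to as well. The only genuine obstacle, then, is bookkeeping: making sure the assignment of indices in each application of the lemma is legitimate and that the biconditional propagates correctly through $\mathcal{I}$. This is routine, so I expect the corollary to follow immediately once the four-term sequence is correctly broken into the two short exact sequences above.
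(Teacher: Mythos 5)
Your proof is correct and is essentially identical to the paper's: the paper splits the four-term sequence at $\mathcal{F}/\mathcal{K}$, which is canonically isomorphic to your image sheaf $\mathcal{I} = \operatorname{im}(\mathcal{F} \to \mathcal{G})$, and then applies Lemma~\ref{lem: existence_limits_SES} twice in exactly the way you describe. Your reading of the lemma as a symmetric equivalence (given the dimension hypothesis on $\mathcal{F}_k$) is also the intended one, so there is no gap.
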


\begin{proof}

We break up the given exact sequence into the following two exact sequences:

$$0 \rightarrow \mathcal{K} \rightarrow \mathcal{F} \rightarrow \mathcal{F}/\mathcal{K} \rightarrow 0$$

\noindent and

$$0 \rightarrow \mathcal{F}/\mathcal{K} \rightarrow \mathcal{G} \rightarrow \mathcal{C} \rightarrow 0.$$

\noindent If the $\mathscr{L}$-volume of $\mathcal{F}$   exists as a limit, we apply \cref{lem: existence_limits_SES} to the first sequence to conclude that the same is true for $\mathcal{F}/\mathcal{K}$ and $vol_{\mathscr{L}}(\mathcal{F}/\mathcal{K})=vol_{\mathscr{L}}(\mathcal{F})$. Then, by \cref{lem: existence_limits_SES} and the second sequence, we see that the $\mathscr{L}$-volume of $\mathcal{G}$   exists as a limit and $vol_{\mathscr{L}}(\mathcal{F})=vol_{\mathscr{L}}(\mathcal{F}/\mathcal{K})=vol_{\mathscr{L}}(\mathcal{G})$.

An analogous argument shows that if the $\mathscr{L}$-volume of $\mathcal{G}$   exists as a limit, then the same is true for $\mathcal{F}$ and $vol_{\mathscr{L}}(\mathcal{F})=vol_{\mathscr{L}}(\mathcal{G})$.  

\end{proof}

\subsection{Non-zero-divisors, Invertible Sheaves, and Cartier Divisors}

\begin{lemma}\label{lem: existence_nzd}
Let $X$ be a projective scheme over a field.  Let $\mathscr{M}$ be an invertible sheaf on $X$ and let $\mathscr{A}$ be an ample invertible sheaf. Then, there exists an $n_{0} \in \mathbb{N}$ such that $H^{0}(X, \mathscr{A}^n \otimes \mathscr{M})$ contains a non-zero-divisor for $n \geq n_0$.
\end{lemma}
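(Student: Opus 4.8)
The plan is to reduce the existence of a non-zero-divisor to a non-vanishing condition at the finitely many associated points of $X$, and then to produce such a section, treating infinite and finite base fields separately. First I would record the local criterion: a global section $s$ of a line bundle $\mathscr{N}$ on $X$ is a non-zero-divisor (i.e. the multiplication map $\mathcal{O}_X \xrightarrow{\,s\,} \mathscr{N}$ is injective) if and only if $s$ does not vanish at any associated point of $X$. Indeed, $s$ is injective exactly when each germ $s_x$ is a non-zero-divisor in $\mathcal{O}_{X,x}$, and since $\mathscr{N}_x$ is free of rank one, $s_x$ fails to be a non-zero-divisor precisely when it lies in an associated prime of $\mathcal{O}_{X,x}$, that is, when $s$ vanishes at some associated point $\xi$ with $x \in \overline{\{\xi\}}$. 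As $X$ is Noetherian, the set $\Sigma = \{\xi_1,\dots,\xi_r\}$ of associated points is finite; and since a section that does not vanish at a point also does not vanish at any generization of it, I may discard every $\xi_i$ whose closure contains another $\xi_j$ and assume the surviving points are pairwise incomparable under specialization. Thus it suffices to exhibit, for all $n \gg 0$, a section of $\mathscr{A}^n \otimes \mathscr{M}$ that does not vanish on $\Sigma$.

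Since $\mathscr{A}$ is ample, $\mathscr{A}^n \otimes \mathscr{M}$ is globally generated for $n \geq n_1$, so each evaluation map $H^0(X, \mathscr{A}^n \otimes \mathscr{M}) \to (\mathscr{A}^n \otimes \mathscr{M}) \otimes \kappa(\xi_i)$ is nonzero, and the set $V_i^{(n)}$ of sections vanishing at $\xi_i$ is a proper $\kappa$-linear subspace. When $\kappa$ is infinite, a vector space is not a finite union of proper subspaces, so $H^0(X,\mathscr{A}^n \otimes \mathscr{M}) \neq \bigcup_i V_i^{(n)}$ and any section outside this union is the desired non-zero-divisor; this settles the infinite-field case for every $n \geq n_1$.

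The hard part will be the finite-field case, where a vector space can be a union of proper subspaces and the linear argument collapses. Here I would replace linear combinations by the multiplicative mechanism underlying homogeneous prime avoidance, which is insensitive to the size of $\kappa$. Working in the section ring $S = \bigoplus_{m \geq 0} H^0(X, \mathscr{A}^m)$ and its finitely generated graded module $\bigoplus_{m \geq 0} H^0(X, \mathscr{A}^m \otimes \mathscr{M})$, the points of $\Sigma$ correspond to homogeneous primes $\mathfrak{p}_1,\dots,\mathfrak{p}_r$ not containing the irrelevant ideal. Using ampleness to produce homogeneous sections lying in $\bigcap_{j \neq i} \mathfrak{p}_j$ but not in $\mathfrak{p}_i$, I would form, exactly as in the proof of homogeneous prime avoidance, a single homogeneous element avoiding every $\mathfrak{p}_i$ by taking a sum $x^{b} + y^{a}$ of powers arranged to have equal degree. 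This yields one section of some twist that does not vanish on $\Sigma$.

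Finally I would promote this to all large degrees. By the same device one obtains a section $h \in H^0(X, \mathscr{A}^{D})$ with $D \geq 1$ that is nonvanishing on $\Sigma$ (the case $\mathscr{M} = \mathcal{O}_X$), together with a nonvanishing section $s_0 \in H^0(X, \mathscr{A}^{c} \otimes \mathscr{M})$; since a tensor product of sections nonvanishing at $\xi_i$ is again nonvanishing there, the products $s_0 h^{k}$ are nonvanishing on $\Sigma$ and realize every degree $c + kD$. To reach all residue classes modulo $D$ I would arrange, again varying the degrees of the building blocks in the product construction, untwisted sections nonvanishing on $\Sigma$ in two degrees with $\gcd = 1$, so that their products fill a numerical semigroup containing every large integer; multiplying $s_0$ by these exhibits a non-zero-divisor in $H^0(X, \mathscr{A}^n \otimes \mathscr{M})$ for all $n \geq n_0$. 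The delicate points to verify are the existence of the separating homogeneous sections from ampleness and the degree bookkeeping ensuring that no residue class is missed.
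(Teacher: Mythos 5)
Your proposal is correct in outline and can be completed, but it takes a genuinely different route from the paper. The paper never splits into cases on the base field: it takes an irredundant primary decomposition of the zero ideal sheaf, so that the zero-divisors are exactly the sections vanishing along one of the associated subvarieties $V_1,\dots,V_r$, chooses a \emph{closed} point $x_i$ on each $V_i$, and applies Serre vanishing to the sequence $0\to\mathcal I\to\mathcal O_X\to\oplus_i\mathcal O_{x_i}\to 0$ with $\mathcal I=\prod_i\mathcal I_{x_i}$ to conclude that the evaluation map $H^0(X,\mathscr{A}^n\otimes\mathscr{M})\to\oplus_i k(x_i)$ is surjective for all $n\gg 0$. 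A preimage of $(1,\dots,1)$ is then a section not vanishing at any $x_i$, hence (its zero locus being closed) not vanishing along any $V_i$, hence a non-zero-divisor. Evaluating at closed points is exactly what dissolves the finite-field difficulty you wrestle with: surjectivity onto a direct sum of residue fields produces a simultaneously non-vanishing section regardless of the size of $\kappa$, and it does so for every $n\ge n_0$ at once. Your route instead reduces to non-vanishing at the associated points, handles infinite $\kappa$ by the ``a vector space is not a finite union of proper subspaces'' argument, and handles finite $\kappa$ by homogeneous prime avoidance in the section ring; what it buys is a more purely algebraic argument using only global generation rather than cohomological vanishing, at the price of a case split and degree bookkeeping.

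Two details in your finite-field half need care. First, the power trick $x^{b}+y^{a}$ of homogeneous prime avoidance does not apply verbatim to the module $\bigoplus_m H^0(X,\mathscr{A}^m\otimes\mathscr{M})$, since you cannot take powers of module elements; the correct form of ``the same device'' is a sum $s_0=\sum_i f_i t_i$, where $t_i$ is a section of some $\mathscr{A}^{m_i}\otimes\mathscr{M}$ not vanishing at $\xi_i$ (from global generation) and $f_i$ is a homogeneous element of $\bigcap_{j\neq i}\mathfrak{p}_j\setminus\mathfrak{p}_i$, with all degrees $\deg f_i+m_i$ equal; then at $\xi_i$ every term but $f_it_i$ vanishes, so $s_0$ does not. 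Second, your gcd bookkeeping does close, but for a reason worth making explicit: each separating class $\bigcap_{j\neq i}\mathfrak{p}_j\setminus\mathfrak{p}_i$ contains homogeneous elements of \emph{every} sufficiently large degree (multiply a fixed one by sections of $\mathscr{A}^m$, $m\gg 0$, not vanishing at $\xi_i$). Once this is noted, the sums above can be formed in every sufficiently large degree $n$, which produces a non-zero-divisor in $H^0(X,\mathscr{A}^n\otimes\mathscr{M})$ for all $n\ge n_0$ directly and makes the numerical-semigroup step unnecessary.
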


\begin{proof}

The ideal sheaf 0 of $\mathcal O_X$ has an irredundant primary decomposition $0=\mathcal{Q}_1\cap \cdots \cap \mathcal{Q}_r$ where the $\mathcal{Q}_i$ are primary ideal sheaves for some closed integral subvarieties $V_i$ of $X$. The $V_i$ are the associated subvarieties of $X$.  Such a decomposition can be found by sheafifying a homogeneous  irredundant primary decomposition of the zero ideal in any projective embedding of $X$.

Let $x_1,\ldots, x_s\in X$ be a set of distinct closed points such that each $V_i$ contains at least one of these points. Let $\mathcal I=\prod_{i=1}^s\mathcal I_{x_i}$, where $\mathcal I_{x_i}$ is the ideal sheaf of the point $x_i$. We have a short exact sequence of $\mathcal O_X$-modules 
$$
0\rightarrow \mathcal I\rightarrow \mathcal O_X\rightarrow \oplus_{i=1}^s\mathcal O_{x_i}\rightarrow 0.
$$

Tensor this exact sequence with $ \mathscr{A}^n \otimes \mathscr{M}$ and take cohomology. By Serre's Vanishing Theorem, for $n\gg 0$, we have an exact sequence
$$	
H^0(X, \mathscr{A}^n \otimes \mathscr{M})\rightarrow \oplus_{i=1}^sk(x_i)\rightarrow 0,
$$
where $k(x_i)$ is the residue field of the point $x_i$. Thus, there exists a section $s\in H^0(X,\mathscr{A}^n \otimes \mathscr{L})$ which does not vanish at any of the $x_i$. It follows that $s$ does not vanish at any of the $V_i$ and so $s$ is not a zero-divisor on $\mathcal O_X$.

\end{proof}

For any scheme $X$, the association $D \to \mathcal{O}_{X}(D)$ gives an injective homomorphism from the group of Cartier divisors modulo linear equivalence to $\Pic(X)$ (See \cite[Corollary II.6.14]{Har}). Nakai \cite{Nak} has shown that if X is a projective scheme over an infinite field then this homomorphism is an isomorphism. We deduce the known fact that this is also the case for projective schemes over arbitrary fields. 
 
\begin{corollary}\label{cor: invertible_sheaves_from_divisors}

Let $X$ be a projective scheme over a field. Then, for any invertible sheaf $\mathscr{L}$ on $X$, there exists a Cartier divisor $D$ such that $\mathscr{L} \simeq \mathcal{O}_{X}(D)$. Moreover, under the identification described above, $\Pic(X)$ is generated by effective divisors.

\end{corollary}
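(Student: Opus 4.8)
The plan is to realize an arbitrary $\mathscr{L}$ as the difference of two effective Cartier divisors, using Lemma~\ref{lem: existence_nzd} to produce the sections that cut them out. The central observation is the standard correspondence between regular sections and effective divisors: if $s$ is a global section of an invertible sheaf $\mathscr{N}$ that is a non-zero-divisor, then $s$ determines an effective Cartier divisor $D$ with $\mathcal{O}_{X}(D)\simeq\mathscr{N}$. Indeed, such an $s$ corresponds to an injection $\mathscr{N}^{-1}\hookrightarrow\mathcal{O}_{X}$ whose image is an invertible ideal sheaf $\mathscr{I}\simeq\mathscr{N}^{-1}$; the closed subscheme it cuts out is an effective Cartier divisor $D$ with $\mathcal{O}_{X}(-D)=\mathscr{I}$, and hence $\mathcal{O}_{X}(D)\simeq\mathscr{N}$.

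Concretely, I would fix an ample invertible sheaf $\mathscr{A}$ on $X$. Applying Lemma~\ref{lem: existence_nzd} once with $\mathscr{M}=\mathscr{L}$ and once with $\mathscr{M}=\mathcal{O}_{X}$, and choosing an integer $n$ exceeding both of the resulting thresholds, I obtain non-zero-divisor sections
$$
s\in H^{0}(X,\mathscr{A}^{n}\otimes\mathscr{L})\quad\text{and}\quad t\in H^{0}(X,\mathscr{A}^{n}).
$$
By the correspondence above, $s$ and $t$ determine effective Cartier divisors $D_{1}$ and $D_{2}$ with $\mathcal{O}_{X}(D_{1})\simeq\mathscr{A}^{n}\otimes\mathscr{L}$ and $\mathcal{O}_{X}(D_{2})\simeq\mathscr{A}^{n}$.

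Setting $D:=D_{1}-D_{2}$, a Cartier divisor, I then compute
$$
\mathcal{O}_{X}(D)\simeq\mathcal{O}_{X}(D_{1})\otimes\mathcal{O}_{X}(D_{2})^{-1}\simeq(\mathscr{A}^{n}\otimes\mathscr{L})\otimes(\mathscr{A}^{n})^{-1}\simeq\mathscr{L},
$$
which yields the first assertion. Since the injective homomorphism $D\mapsto\mathcal{O}_{X}(D)$ from Cartier divisors modulo linear equivalence to $\Pic(X)$ is thereby seen to be surjective, it is an isomorphism; and because every class has been exhibited as $[D_{1}]-[D_{2}]$ with $D_{1},D_{2}$ effective, it follows that $\Pic(X)$ is generated by effective divisors.

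I expect the only delicate point to be the very first one: verifying that the non-zero-divisor sections furnished by Lemma~\ref{lem: existence_nzd} are precisely \emph{regular sections} in the sense required to define effective Cartier divisors, i.e.\ that locally they are given by non-zero-divisors so that the associated ideal sheaf is genuinely invertible. Once this identification is in place the rest is formal. It is worth noting that the whole argument is uniform in the base field, so it recovers Nakai's isomorphism without the hypothesis that the field be infinite.
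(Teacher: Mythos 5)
Your proposal is correct and follows essentially the same route as the paper: both apply Lemma~\ref{lem: existence_nzd} twice (with $\mathscr{M}=\mathcal{O}_X$ and $\mathscr{M}=\mathscr{L}$) to produce non-zero-divisor sections, write $\mathscr{L}\simeq\mathcal{O}_X(D)$ with $D$ a difference of the two resulting effective divisors, and read off the effective generation of $\Pic(X)$ from that decomposition. The only (cosmetic) difference is that the paper replaces $\mathscr{A}$ by a power so that $\mathscr{A}$ itself has a non-zero-divisor section $t$ and then uses $n\,div(t)$, whereas you take a section of $\mathscr{A}^n$ directly; your attention to the correspondence between non-zero-divisor sections and effective Cartier divisors is exactly the point the paper uses implicitly in writing $div(s)$.
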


\begin{proof}

Choose an ample line bundle $\mathscr{A}$ on X. After perhaps replacing $\mathscr{A}$ with a positive power of itself, we can use \cref{lem: existence_nzd} with $\mathscr{M} = \mathcal{O}_{X}$ to find a non-zero-divisor $t \in H^{0}(X, \mathscr{A})$. Then $\mathscr{A} \simeq \mathcal{O}_{X}(H)$, where $H\coloneqq div(t)$.

Again by \cref{lem: existence_nzd}, for some $n \in \mathbb{N}$, we can find a non-zero-divisor
$$
s \in H^{0}(X, \mathcal{O}_{X}(nH) \otimes \mathscr{L}).
$$
 Thus, $\mathcal{O}_{X}(nH) \otimes \mathscr{L} \simeq \mathcal{O}_{X}(div(s))$. Setting $D = div(s) - nH$, we get that $\mathscr{L} \simeq \mathcal{O}_{X}(D)$.

For the last statement of the corollary, simply notice that $D$ is a difference of effective divisors.

\end{proof}

If $D$ and $E$ are Cartier divisors on a projective scheme $X$, we will write $D\sim E$ if the $\mathcal O_X$-modules $\mathcal O_X(D)$ and $\mathcal O_X(E)$ are isomorphic.

\subsection{A lemma on volume}

We now show that volumes are unaffected by tensoring with invertible sheaves.

\begin{lemma} \label{lem: tensoring_invertible_preserves_limit}

Let $X$ be a projective scheme over a field. Let $\mathscr{L}$ and $\mathscr{M}$ be invertible sheaves on $X$. Suppose that the volume of $\mathscr{L}$ exists as a limit. Then $ vol_{\mathscr{L}}(\mathscr{M}) $ also exists as a limit and

$$ vol_{\mathscr{L}}(\mathscr{M}) = vol(\mathscr{L}). $$

\end{lemma}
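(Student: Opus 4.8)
The plan is to reduce to the case where $\mathscr{M}$ is associated to an effective Cartier divisor with non-zero-divisor local equations, and then to compare $vol_{\mathscr{L}}(\mathscr{M})$ with $vol_{\mathscr{L}}(\mathcal{O}_X)=vol(\mathscr{L})$ by means of a short exact sequence to which Lemma \ref{lem: existence_limits_SES} applies.

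First I would use Corollary \ref{cor: invertible_sheaves_from_divisors} to write $\mathscr{M}\simeq\mathcal{O}_X(A)\otimes\mathcal{O}_X(B)^{-1}$ with $A$ and $B$ effective Cartier divisors. Following the proof of that corollary, the defining section of each of $A$ and $B$ may be chosen to be a non-zero-divisor (they are produced by Lemma \ref{lem: existence_nzd} as $\operatorname{div}(s)$ and a power of $\operatorname{div}(t)$). A non-zero-divisor section is a unit at every associated point of $X$, hence in particular at the generic point of every $d$-dimensional component; consequently neither $A$ nor $B$ contains such a generic point, so $\dim A<d$ and $\dim B<d$.

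The core is the following two reduction steps. Let $\mathscr{N}$ be any invertible sheaf for which $vol_{\mathscr{L}}(\mathscr{N})$ exists and equals $vol(\mathscr{L})$. For the divisor $A$ there is a short exact sequence $0\to\mathcal{O}_X\to\mathcal{O}_X(A)\to\mathcal{C}_A\to 0$, where the first map is multiplication by the non-zero-divisor defining section and $\mathcal{C}_A$ is supported on $A$. Tensoring by the line bundle $\mathscr{N}$ preserves exactness and gives
$$
0\to\mathscr{N}\to\mathscr{N}\otimes\mathcal{O}_X(A)\to\mathscr{N}\otimes\mathcal{C}_A\to 0,
$$
whose right-hand term is still supported on $A$, hence in dimension $<d$. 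Lemma \ref{lem: existence_limits_SES} (with the left-hand term playing the role of the sheaf of known volume) then shows that $vol_{\mathscr{L}}(\mathscr{N}\otimes\mathcal{O}_X(A))$ exists and equals $vol_{\mathscr{L}}(\mathscr{N})=vol(\mathscr{L})$. Symmetrically, tensoring $0\to\mathcal{O}_X(-B)\to\mathcal{O}_X\to\mathcal{C}_B\to 0$ by $\mathscr{N}$ and applying Lemma \ref{lem: existence_limits_SES}, now with the middle term $\mathscr{N}$ as the sheaf of known volume and $\mathscr{N}\otimes\mathcal{C}_B$ supported in dimension $<d$, shows that $vol_{\mathscr{L}}(\mathscr{N}\otimes\mathcal{O}_X(-B))$ exists and equals $vol(\mathscr{L})$.

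Finally I would assemble the two steps. Beginning with $\mathscr{N}=\mathcal{O}_X$, for which $vol_{\mathscr{L}}(\mathcal{O}_X)=vol(\mathscr{L})$ exists by hypothesis, the first step yields the assertion for $\mathcal{O}_X(A)$, and the second step, applied with $\mathscr{N}=\mathcal{O}_X(A)$, yields it for $\mathcal{O}_X(A)\otimes\mathcal{O}_X(-B)\simeq\mathscr{M}$. Thus $vol_{\mathscr{L}}(\mathscr{M})$ exists and equals $vol(\mathscr{L})$. I expect the only delicate point to be the bookkeeping that guarantees the divisors carry non-zero-divisor equations, so that the two structure sequences are genuinely short exact and their cokernels are supported in dimension strictly less than $d$; the remaining content is a direct application of Lemma \ref{lem: existence_limits_SES}.
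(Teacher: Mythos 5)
Your proposal is correct and follows essentially the same route as the paper: both decompose $\mathscr{M}$ via \cref{cor: invertible_sheaves_from_divisors} as $\mathcal{O}_X(A-B)$ with $A,B$ effective, and both apply \cref{lem: existence_limits_SES} to the structure sequences $0\to\mathcal{O}_X\to\mathcal{O}_X(A)\to\mathcal{O}_A\otimes\mathcal{O}_X(A)\to 0$ and $0\to\mathcal{O}_X(A-B)\to\mathcal{O}_X(A)\to\mathcal{O}_B\otimes\mathcal{O}_X(A)\to 0$. Your two chained reduction steps (with $\mathscr{N}=\mathcal{O}_X$ and then $\mathscr{N}=\mathcal{O}_X(A)$) are exactly the paper's effective case followed by its general case, and your attention to the non-zero-divisor condition correctly fills in the paper's terse remark that the cokernels are supported in dimension less than $\dim X$.
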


\begin{proof}

By \cref{{cor: invertible_sheaves_from_divisors}}, we can assume that $\mathscr{M}=\mathcal{O}_{X}(D)$ for some Cartier divisor $D$. Let us consider first the case where $D$ is effective. We have a short exact sequence 

$$ 0 \rightarrow \mathcal{O}_{X} \left( -D \right) \rightarrow \mathcal{O}_{X} \rightarrow \mathcal{O}_{D} \rightarrow 0 $$

\noindent which, after tensoring with $ \mathcal{O}_{X} \left( D \right) $, becomes

$$ 0 \rightarrow \mathcal{O}_{X} \rightarrow \mathcal{O}_{X} \left( D \right) \rightarrow \mathcal{O}_{D} \otimes \mathcal{O}_{X} \left( D \right) \rightarrow 0.$$

\noindent Now, the volume of $\mathscr{L} \otimes \mathcal{O}_{X}=\mathscr{L}$ exists by assumption. Moreover, the sheaf $\mathcal{O}_{D} \otimes \mathcal{O}_{X} \left( D \right)$ is supported on a proper closed set of $X$. Thus, by \cref{lem: existence_limits_SES}, the limit

$$\lim \limits_{n \to \infty} \dfrac{h^{0}\left(X,\mathcal{O}_{X} \left( D \right) \otimes \mathscr{L}^{n} \right)}{n^{d}} = \lim \limits_{n \to \infty} \dfrac{h^{0}\left(X,\mathscr{L}^{n} \right)}{n^{d}}$$

\noindent exists.

Suppose now that $D$ is an arbitrary Cartier divisor. Thanks to \cref{cor: invertible_sheaves_from_divisors}, we can write $D \sim A-B$ where both $A$ and $B$ are effective Cartier divisors. Since $B$ is effective, we have a short exact sequence

$$ 0 \rightarrow \mathcal{O}_{X} \left(-B\right) \rightarrow \mathcal{O}_{X} \rightarrow \mathcal{O}_{B} \rightarrow 0. $$

\noindent We can tensor the sequence above with $\mathcal{O}_{X} \left( A \right)$ and get 

$$ 0 \rightarrow \mathcal{O}_{X} \left(D\right) \rightarrow \mathcal{O}_{X} \left( A \right) \rightarrow \mathcal{O}_{B} \otimes \mathcal{O}_{X} \left(A \right) \rightarrow 0. $$

\noindent Again by \cref{lem: existence_limits_SES} we conclude that

$$ \lim \limits_{n \to \infty} \dfrac{h^{0}\left(X,\mathcal{O}_{X} \left( D \right) \otimes \mathscr{L}^{n} \right)}{n^{d}} = \lim \limits_{n \to \infty} \dfrac{h^{0}\left(X,\mathcal{O}_{X} \left( A \right)\otimes \mathscr{L}^{n} \right)}{n^{d}},   $$

\noindent where the limit on the right exists since $A$ is effective as proven above. Furthermore, 

$$\lim \limits_{n \to \infty} \dfrac{h^{0}\left(X,\mathcal{O}_{X} \left( A \right)\otimes \mathscr{L}^{n} \right)}{n^{d}} = \lim \limits_{n \to \infty} \dfrac{h^{0}\left(X, \mathscr{L}^{n} \right)}{n^{d}}. $$

\noindent Thus, $vol_{\mathscr{L}}(\mathscr{M})$ exists as a limit and $vol_{\mathscr{L}}(\mathscr{M})= vol(\mathscr{L})$ whenever $\mathscr{M}$ is invertible.

\end{proof}

\begin{remark} With the assumptions of the above lemma, we  have that $vol(\mathscr{L})$ exists as a limit if and only if 
$vol_{\mathscr{L}}(\mathscr{M})$ exists as a limit.
\end{remark}

\subsection{Ideal sheaves on nonsingular varieties}

The following result is essentially \cite[Lemma 13.8]{Cut_Book}. It is stated in \cite{Cut_Book} for nonsingular quasi-projective varieties over algebraically closed fields, but the proof given there carries over without modifications to the case of arbitrary fields.

\begin{lemma}[{{\cite[Lemma 13.8]{Cut_Book}}}] \label{lem: factorization_of_ideal_sheaves}

Suppose that $X$ is a quasi-projective nonsingular variety over an arbitrary field and $\mathcal{I}$ is an ideal sheaf on $X$. Then there exists an effective divisor $D$ on $X$ and an ideal sheaf $\mathcal{J}$ on $X$ such that the support of $\mathcal{O}_{X}/\mathcal{J}$ has codimension greater than or equal to 2 in $X$ and $\mathcal{I} = \mathcal{O}_{X}(-D)\mathcal{J}.$ 

\end{lemma}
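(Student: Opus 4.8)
The plan is to build the divisor $D$ from the orders of vanishing of $\mathcal{I}$ along the prime divisors of $X$, and then to define $\mathcal{J}$ as the resulting quotient, reducing every verification to a local computation in a unique factorization domain. We may assume $\mathcal{I}\neq 0$. Since $X$ is nonsingular, each stalk $\mathcal{O}_{X,p}$ is a regular local ring and hence a unique factorization domain by the Auslander--Buchsbaum theorem; in particular $X$ is regular in codimension one, so for every prime divisor (integral closed subscheme of codimension one) $V\subset X$ the local ring $\mathcal{O}_{X,V}$ is a discrete valuation ring with valuation $\operatorname{ord}_V$. First I would set $a_V=\operatorname{ord}_V(\mathcal{I})$, the order of the stalk $\mathcal{I}_V$ in $\mathcal{O}_{X,V}$. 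Because $\mathcal{I}$ is a nonzero coherent ideal sheaf, its cosupport $V(\mathcal{I})$ is a proper closed subset with only finitely many codimension-one components, so $a_V=0$ for all but finitely many $V$; this allows me to define the effective divisor $D=\sum_V a_V V$.

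Next I would set $\mathcal{J}:=\mathcal{O}_X(D)\cdot\mathcal{I}$, interpreting both factors as subsheaves of the constant sheaf $\mathcal{K}_X$ of rational functions, and verify the two assertions stalkwise. Fix $p\in X$, write $R=\mathcal{O}_{X,p}$ and $I=\mathcal{I}_p$, and let $g\in R$ be a local equation for $D$ at $p$, so that $\mathcal{O}_X(-D)_p=gR$ and $\mathcal{O}_X(D)_p=g^{-1}R$. Working in the UFD $R$, let $f$ be a greatest common divisor of a finite generating set of $I$, and write $I=fJ$ with $J=f^{-1}I\subseteq R$. The prime factors of $f$ are exactly the height-one primes $\pi_V$ (local equations of the prime divisors $V\ni p$), and the exponent of $\pi_V$ in $f$ equals $\min_{h\in I}\operatorname{ord}_V(h)=\operatorname{ord}_V(\mathcal{I})=a_V$; hence $f$ and $g=\prod_{V\ni p}\pi_V^{a_V}$ are associates in $R$. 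Consequently $\mathcal{J}_p=g^{-1}I=J\subseteq R$ is an honest ideal and $\mathcal{O}_X(-D)_p\cdot\mathcal{J}_p=gR\cdot g^{-1}I=I$. Since two subsheaves of $\mathcal{K}_X$ agreeing on all stalks coincide, these local identities assemble to the global factorization $\mathcal{I}=\mathcal{O}_X(-D)\mathcal{J}$ with $\mathcal{J}$ a genuine ideal sheaf.

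Finally I would establish the codimension statement, which is exactly what the construction is engineered to guarantee and the step deserving the most care. By construction $\operatorname{ord}_V(\mathcal{J})=\operatorname{ord}_V(\mathcal{I})-\operatorname{ord}_V(D)=a_V-a_V=0$ for every prime divisor $V$, so $\mathcal{J}_V=\mathcal{O}_{X,V}$ and $V$ is not contained in $\operatorname{Supp}(\mathcal{O}_X/\mathcal{J})$; equivalently, in the local picture the generators of the gcd-reduced ideal $J$ share no common prime factor, so $V(J)\subset\operatorname{Spec}R$ has codimension $\ge 2$. Hence $\operatorname{Supp}(\mathcal{O}_X/\mathcal{J})$ contains no codimension-one component and therefore has codimension $\ge 2$ in $X$, completing the argument. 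I expect the main obstacle to be the commutative-algebra core of the middle step---extracting the divisorial (gcd) part of $I$ in the UFD $R$ and matching its prime exponents with the DVR orders $a_V$---since everything else is the bookkeeping that glues the pointwise factorizations into the asserted global one.
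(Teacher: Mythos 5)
Your proof is correct, and it is essentially the argument the paper relies on: the paper gives no proof of this lemma, deferring to \cite[Lemma 13.8]{Cut_Book} with the remark that the proof there is field-independent, and your construction is precisely that standard argument. Namely, you define $D$ by the orders $\mathrm{ord}_V(\mathcal{I})$ along the finitely many prime divisors in the cosupport of $\mathcal{I}$, and then check stalkwise --- using that each $\mathcal{O}_{X,p}$ is a UFD, so that the local equation of $D$ is an associate of the gcd of local generators of $\mathcal{I}$ --- that $\mathcal{J}=\mathcal{O}_X(D)\mathcal{I}$ is a genuine ideal sheaf whose cosupport has no codimension-one component; nothing in this uses algebraic closure, which confirms the paper's claim that the cited proof carries over to arbitrary fields.
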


\section{Nilradicals and Volumes} \label{sec: technical_proposition}

The following proposition is the main ingredient in our proof of 
\cref{thm: existence_limits_codimension1_in_Nonsingular}.

\begin{proposition} \label{prop: all_depends_on _volume_of_nilradical}

Let $X$ be an irreducible projective scheme over a field $\kappa$. Let $\mathcal{N}$ be the nilradical of $X$. Let $\mathscr{L}$ be an invertible sheaf on $X$. Suppose that $vol_{\mathscr{L}}(\mathcal{N} \otimes \mathscr{M})$ exists as a limit for any invertible sheaf $\mathscr{M}$. Then $vol(\mathscr{L})$ exists as a limit as well.
\end{proposition}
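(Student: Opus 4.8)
The plan is to strip away the reduced structure of $X$ and reduce everything to the sections that survive restriction to $X_{\mathrm{red}}$, the integral projective variety of dimension $d=\dim X$ cut out by $\mathcal{N}$, where Theorem \ref{Theorem14a} and Theorem \ref{KIthm} are available. First I would fix an invertible sheaf $\mathscr{M}$, tensor the exact sequence $0\to\mathcal{N}\to\mathcal{O}_X\to\mathcal{O}_{X_{\mathrm{red}}}\to 0$ by the locally free sheaf $\mathscr{M}\otimes\mathscr{L}^n$, and take global sections. Writing $W_n(\mathscr{M})$ for the image of $H^0(X,\mathscr{M}\otimes\mathscr{L}^n)$ inside $H^0\!\left(X_{\mathrm{red}},(\mathscr{M}\otimes\mathscr{L}^n)|_{X_{\mathrm{red}}}\right)$, left-exactness gives the exact identity
$$ h^{0}(X,\mathscr{M}\otimes\mathscr{L}^{n}) \;=\; h^{0}(X,\mathcal{N}\otimes\mathscr{M}\otimes\mathscr{L}^{n}) + \dim_{\kappa} W_{n}(\mathscr{M}). $$
By hypothesis the first term on the right, divided by $n^{d}$, converges, so $vol_{\mathscr{L}}(\mathscr{M})$ exists as a limit if and only if $\lim_{n} \dim_{\kappa}W_{n}(\mathscr{M})/n^{d}$ exists. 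Moreover, since restriction $\bigoplus_n H^0(X,\mathscr{L}^n)\to\bigoplus_n H^0(X_{\mathrm{red}},\mathscr{L}^n)$ is a homomorphism of section rings, $\bigoplus_n W_n(\mathcal{O}_X)$ is a graded $\kappa$-subalgebra of the section ring of $\mathscr{L}|_{X_{\mathrm{red}}}$, i.e. a graded linear series on the variety $X_{\mathrm{red}}$.

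Next I would bring in the reduced variety. On $X_{\mathrm{red}}$ the condition $\dim\mathcal{N}_{X_{\mathrm{red}}}<d$ holds trivially, so Theorem \ref{Theorem14a} shows that $vol(\mathscr{L}|_{X_{\mathrm{red}}})$ exists as a limit, and then \cref{lem: tensoring_invertible_preserves_limit} (applied on $X_{\mathrm{red}}$) gives that the full limit $\lim_{n} h^{0}\!\left(X_{\mathrm{red}},(\mathscr{M}\otimes\mathscr{L}^{n})|_{X_{\mathrm{red}}}\right)/n^{d}$ exists and equals $vol(\mathscr{L}|_{X_{\mathrm{red}}})$, independently of $\mathscr{M}$. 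Writing $\dim_{\kappa}W_n(\mathscr{M}) = h^{0}(X_{\mathrm{red}},(\mathscr{M}\otimes\mathscr{L}^n)|_{X_{\mathrm{red}}}) - \varepsilon_n(\mathscr{M})$, where $\varepsilon_n(\mathscr{M})$ is the dimension of the cokernel of the restriction map, the whole problem collapses to showing that $\lim_{n}\varepsilon_n(\mathscr{M})/n^{d}$ exists for a suitable $\mathscr{M}$. If $\mathscr{L}|_{X_{\mathrm{red}}}$ is not big this is immediate, taking $\mathscr{M}=\mathcal{O}_X$: then $h^{0}(X_{\mathrm{red}},\mathscr{L}^{n})/n^{d}\to 0$, hence $\dim_{\kappa}W_n(\mathcal{O}_X)/n^{d}\to 0$, and $vol(\mathscr{L})=vol_{\mathscr{L}}(\mathcal{N})$ exists. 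So the substantive case is when $\mathscr{L}|_{X_{\mathrm{red}}}$ is big.

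For the big case I would invoke the Remark following \cref{lem: tensoring_invertible_preserves_limit}, which says that $vol(\mathscr{L})$ exists as a limit if and only if $vol_{\mathscr{L}}(\mathscr{M})$ does for any single $\mathscr{M}$ I care to choose; this is exactly where the hypothesis being stated for \emph{all} twists $\mathscr{M}$ becomes indispensable. Applying Theorem \ref{KIthm} to the graded linear series $\bigoplus_n W_n(\mathcal{O}_X)$ on $X_{\mathrm{red}}$ yields the limit of $\dim_{\kappa}W_{mn}(\mathcal{O}_X)/n^{d}$ only along multiples of the index $m$ of \eqref{eqN2}; the full limit over all $n$ exists precisely when $m=1$ (or the series has volume zero). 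To force index $1$ I would use \cref{lem: existence_nzd} together with \cref{cor: invertible_sheaves_from_divisors}: for an ample $\mathscr{A}$ of high enough power, both $H^{0}(X,\mathscr{A})$ and $H^{0}(X,\mathscr{A}\otimes\mathscr{L})$ contain non-zero-divisors, and a non-zero-divisor does not vanish along $X_{\mathrm{red}}$ (the top associated subvariety of $X$), hence survives restriction. Choosing $\mathscr{M}=\mathscr{A}$ then produces surviving sections in two consecutive, hence coprime, $\mathscr{L}$-degrees, and the strategy is to leverage the identity above — valid for every $\mathscr{M}$ — to transfer this to an index-$1$ statement for the image series and thereby extract the full limit.

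The step I expect to be the main obstacle is precisely this last one: controlling the cokernel $\varepsilon_n$, equivalently showing that the relevant image series on $X_{\mathrm{red}}$ has index $1$ once $\mathscr{L}|_{X_{\mathrm{red}}}$ is big. The difficulty is that $\mathscr{L}$ need not be ample on $X$, so Serre vanishing is unavailable and one cannot simply argue that restriction $H^{0}(X,\mathscr{M}\otimes\mathscr{L}^{n})\to H^{0}(X_{\mathrm{red}},\cdot)$ is eventually surjective; a priori, sections on $X_{\mathrm{red}}$ may fail to lift in infinitely many degrees, and the image series could genuinely be concentrated in a proper subgroup of degrees. Overcoming this — making the maneuver with non-zero-divisor sections and the freedom to vary $\mathscr{M}$ into a rigorous reduction of the non-lifting contribution to a single index-$1$ graded linear series on the reduced variety, to which Theorem \ref{KIthm} applies — is the technical heart of the argument, and is exactly why the hypothesis is imposed on $\mathcal{N}\otimes\mathscr{M}$ for every invertible $\mathscr{M}$ rather than on $\mathcal{N}$ alone.
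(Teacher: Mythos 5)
Your scaffolding coincides with the paper's: the identity $h^{0}(X,\mathscr{M}\otimes\mathscr{L}^{n}) = h^{0}(X,\mathcal{N}\otimes\mathscr{M}\otimes\mathscr{L}^{n}) + \dim_{\kappa}W_{n}(\mathscr{M})$, the graded linear series $L=\bigoplus_{n}W_{n}(\mathcal{O}_{X})$ on $X_{red}$, the intention to apply Theorem \ref{KIthm} to it, and the non-zero-divisor sections $\alpha\in H^{0}(X,\mathscr{A})$, $\beta\in H^{0}(X,\mathscr{A}\otimes\mathscr{L})$ supplied by \cref{lem: existence_nzd}. Your disposal of the case where $\mathscr{L}|_{X_{red}}$ is not big is also correct. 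But the remaining case --- which you yourself flag as ``the technical heart'' --- is left genuinely open, and the mechanism you sketch for it cannot be made to work. With $\mathscr{M}=\mathscr{A}$ you get nonzero elements of $W_{0}(\mathscr{A})$ and $W_{1}(\mathscr{A})$, but $\bigoplus_{n}W_{n}(\mathscr{A})$ is not a graded $\kappa$-algebra (products of its elements carry powers of $\mathscr{A}$), so it has no index in the sense of (\ref{eqN2}) and Theorem \ref{KIthm} says nothing about it; moreover no product formed from $\alpha$, $\beta$ and sections of powers of $\mathscr{L}$ ever lands in a \emph{pure} power of $\mathscr{L}$, so these sections produce no nonzero elements of $L_{n}$ for any $n>0$. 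The statement you hope to ``transfer'' to is in fact false in general: bigness of $\mathscr{L}|_{X_{red}}$ does not force $L$ to have index $1$, since every section of every $\mathscr{L}^{n}$ could restrict to zero on $X_{red}$ (i.e.\ lie in $H^{0}(X,\mathcal{N}\otimes\mathscr{L}^{n})$), making $L$ trivial no matter how big $\mathscr{L}|_{X_{red}}$ is.

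What the paper does at this juncture is a case division built on the \emph{opposite} twist, $\mathscr{M}=\mathscr{A}^{-1}$. Case 1: some $\gamma\in H^{0}(X,\mathscr{A}^{-1}\otimes\mathscr{L}^{n_{0}})$ restricts to a nonzero section of $X_{red}$. Then $\alpha\gamma\in H^{0}(X,\mathscr{L}^{n_{0}})$ and $\beta\gamma\in H^{0}(X,\mathscr{L}^{n_{0}+1})$ --- here the $\mathscr{A}^{-1}$ cancels the $\mathscr{A}$, which is exactly what your choice $\mathscr{M}=\mathscr{A}$ can never do --- and their restrictions are nonzero because $X_{red}$ is integral; hence $L_{n_{0}}\neq 0\neq L_{n_{0}+1}$, $L$ has index $1$, Theorem \ref{KIthm} gives $\lim_{n}\dim_{\kappa}L_{n}/n^{d}$, and the hypothesis with $\mathscr{M}=\mathcal{O}_{X}$ finishes via your identity. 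Case 2: all such restriction maps are zero. Then $H^{0}(X,\mathscr{A}^{-1}\otimes\mathscr{L}^{n}) = H^{0}(X,\mathcal{N}\otimes\mathscr{A}^{-1}\otimes\mathscr{L}^{n})$ for all $n$, so $\lim_{n}h^{0}(X,\mathscr{A}^{-1}\otimes\mathscr{L}^{n})/n^{d}$ exists by the hypothesis with $\mathscr{M}=\mathscr{A}^{-1}$; and the exact sequence $0\to\mathscr{A}^{-1}\to\mathcal{O}_{X}\to\mathcal{O}_{H}\to 0$ cut out by $\alpha$, with $\dim H = d-1$, shows $h^{0}(X,\mathscr{L}^{n})$ and $h^{0}(X,\mathscr{A}^{-1}\otimes\mathscr{L}^{n})$ differ by $O(n^{d-1})$, so $vol(\mathscr{L})$ exists (this sequence is the rigorous substitute for your appeal to the Remark after \cref{lem: tensoring_invertible_preserves_limit}). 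Note that in Case 2 the image series $L$ is never analyzed at all, which is precisely why any plan that insists on establishing index $1$ for $L$ in the big case must fail: the paper's two uses of the hypothesis, at $\mathscr{M}=\mathcal{O}_{X}$ and at $\mathscr{M}=\mathscr{A}^{-1}$, are what replace that impossible step.
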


\begin{proof}

Let $\mathscr{A}$ be an ample invertible sheaf on $X$. We tensor the short exact sequence

\begin{equation}\label{eq: nilradical_SES}
    0 \rightarrow \mathcal{N} \rightarrow \mathcal{O}_{X} \xrightarrow{res} \mathcal{O}_{X_{red}} \rightarrow 0    
\end{equation}

\noindent with $\mathscr{A}$ to obtain

$$ 0 \rightarrow \mathcal{N} \otimes \mathscr{A} \rightarrow \mathscr{A} \xrightarrow{res} \mathscr{A} \otimes \mathcal{O}_{X_{red}} \rightarrow 0 .$$

\noindent By \cref{lem: existence_nzd}, after perhaps replacing $\mathcal{A}$ with a positive power of itself, we can find a section $\alpha \in H^{0}(X, \mathcal{A})$ such that $\alpha$ is not a zero-divisor. This condition is equivalent to the fact that $\alpha$ does not vanish along any associated subvariety of $X$. In particular, since $X_{red}$ is one of the associated subvarieties, we have that the restriction $\alpha|_{X_{red}}$ of $\alpha$ to $X_{red}$ is nonzero. The section $\alpha$ induces a short exact sequence

\begin{equation}\label{eq: hyperplane_SES}
0 \rightarrow \mathscr{A}^{-1} \xrightarrow{\alpha} \mathcal{O}_{X} \rightarrow \mathcal{O}_{H} \rightarrow 0,
\end{equation}

\noindent where $H=div(\alpha)$ is a $(d-1)$-dimensional scheme.

Similarly, after perhaps replacing $\mathscr{A}$ with a positive power of $\mathscr{A}$, we can assume that there is a section $\beta \in H^{0}(X,\mathscr{A} \otimes \mathscr{L})$ such that its restriction $\beta|_{X_{red}}$ to $X_{red}$ is nonzero.

For all $n \in \mathbb{N}^{+}$ we have restriction maps

$$ \Phi_{n}\colon H^{0}(X,\mathscr{L}^{n}) \rightarrow H^{0}(X_{red}, \left(\mathscr{L}|_{X_{red}}\right)^{n}). $$

\noindent Notice that the vector spaces $L_n \coloneqq \Phi_{n} \left( H^{0}(X,\mathscr{L}^{n}) \right)$ fit together into a linear series $L$ associated to $\mathscr{L}|_{X_{red}}$ on $X_{red}$.

We consider now the following two cases:

Case 1: Suppose that there exists $n_0 > 0$ such that the restriction map

$$ H^0(X, \mathscr{A}^{-1} \otimes \mathscr{L}^{n_0}) \rightarrow H^0(X_{red}, \mathscr{A}^{-1} \otimes \left(\mathscr{L}|_{X_{red}} \right)^{n_0} ) $$

\noindent is not zero. That is, there exists $\gamma \in H^0(X, \mathscr{A}^{-1} \otimes \mathscr{L}^{n_0})$ such that its restriction $\gamma|_{X_{red}}$ to $X_{red}$ is not zero. But then $\beta|_{X_{red}} \otimes \gamma|_{X_{red}}$ is a nonzero element of $L_{n_0+1}$ since $X_{red}$ is a variety. Moreover, for the same reason, $\alpha|_{X_{red}} \otimes \gamma|_{X_{red}}$ is a nonzero element of $L_{n_0}$ and this implies that the linear series $L$ has index $1$(the index of $L$ is defined in (\ref{eqN2})). 

By \cite[Theorem 8.1]{Cut01} (Theorem \ref{KIthm}), the limit 

$$ \lim \limits_{n \to \infty} \dfrac{\dim_{\kappa}L_n}{n^d} $$

\noindent exists.

After tensoring \eqref{eq: nilradical_SES} with $\mathscr{L}^n$, taking global sections, using the additivity of dimension, and dividing by $n^d$ we get that the limit

$$ \lim \limits_{n \to \infty} \dfrac{h^0(X, \mathscr{L}^n)}{n^d} = \lim \limits_{n \to \infty} \dfrac{\dim_{\kappa}L_n}{n^d} +  \lim \limits_{n \to \infty} \dfrac{h^{0}(X, \mathcal{N} \otimes \mathscr{L}^{n})}{n^d} $$

\noindent exists, since the limit on the right exists by taking $ \mathscr{M}=\mathcal{O}_{X} $ in our assumption. In particular, we observe that 

$$ vol(\mathscr{L}) = vol(L) + vol_{\mathscr{L}}(\mathcal{N}). $$

Case 2: Suppose now that for all $n>0$ the restriction map

$$ H^0(X, \mathscr{A}^{-1} \otimes \mathscr{L}^{n}) \rightarrow  H^0(X_{red},  \mathscr{A}^{-1} \otimes \left(\mathscr{L}|_{X_{red}} \right)^{n} ) $$

\noindent is the zero map. After tensoring \eqref{eq: nilradical_SES} with $\mathscr{A}^{-1} \otimes \mathscr{L}^{n}$ and taking global sections we see that this implies that

$$ H^0(X, \mathcal{N} \otimes \mathscr{A}^{-1} \otimes \mathscr{L}^{n} ) = H^0(X, \mathscr{A}^{-1} \otimes \mathscr{L}^{n}) $$

\noindent for all $n>0$. Since $vol_{\mathscr{L}}(\mathcal{N} \otimes \mathscr{A}^{-1})$ exists by assumption, we have that the limits

$$ \lim \limits_{n \to \infty} \dfrac{ h^0(X, \mathcal{N} \otimes \mathscr{A}^{-1} \otimes \mathscr{L}^{n} )}{n^d} = \lim \limits_{n \to \infty} \dfrac{h^0(X, \mathscr{A}^{-1} \otimes \mathscr{L}^{n})}{n^d} $$

\noindent exist.

Now we tensor the short exact sequence \eqref{eq: hyperplane_SES} with $\mathscr{L}^{n}$ to get

$$ 0 \rightarrow \mathscr{A}^{-1} \otimes \mathscr{L}^{n} \rightarrow \mathscr{L}^{n} \rightarrow \mathcal{O}_{H} \otimes \mathscr{L}^{n} \rightarrow 0.$$

\noindent Since the scheme $H$ is $(d-1)$-dimensional, by \cite[Proposition 1.31]{Deb}, $h^0(H, (\mathscr{L}|H)^{n})$ can grow at most as $n^{d-1}$. Thus, after taking global sections we see that the limit

$$ \lim \limits_{n \to \infty} \dfrac{h^0(X, \mathscr{L}^{n})}{n^d} = \lim \limits_{n \to \infty} \dfrac{h^0(X, \mathscr{A}^{-1} \otimes \mathscr{L}^{n})}{n^d}$$ 

\noindent exists and in this case

$$ vol(\mathscr{L}) = vol_{\mathscr{L}}(\mathcal{N} \otimes \mathscr{A}^{-1}) .$$

\end{proof}

\section{Proof of \cref{thm: existence_limits_codimension1_in_Nonsingular}}

As a first step towards proving \cref{thm: existence_limits_codimension1_in_Nonsingular}, we 
address the case where the subscheme $Y$ has an invertible ideal sheaf. 

\begin{proposition} \label{prop: existence_limits_pure_codimension_1}

Let $X$ be a nonsingular projective variety over an arbitrary field. Let $D$ be an effective divisor. Regard $D$ as a closed subscheme of $X$ with ideal sheaf $\mathcal{O}_{X}(-D)$ and let $\mathscr{L}$ be a invertible sheaf on $D$. Then the volume of $\mathscr{L}$ exists as a limit.

\end{proposition}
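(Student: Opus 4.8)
The plan is to run two nested inductions after recording the structure of $D$. Write $D=\sum_{i=1}^{r}a_iD_i$, where the $D_i$ are the distinct prime divisors in the support of $D$ and $a_i\ge 1$; since $D$ is an effective Cartier divisor on the nonsingular variety $X$, it is pure of dimension $d=\dim X-1$, and each $D_i$ is a $d$-dimensional projective variety. First I would treat the case in which $D$ has irreducible support (so $D=aD_1$) by induction on the multiplicity $a$, and then reduce the general case to it by induction on the number of components $r$. Throughout, the aim is to put ourselves in a position to invoke \cref{prop: all_depends_on _volume_of_nilradical}.

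For the irreducible-support case $D=aD_1$ I would induct on $a$. When $a=1$ the scheme $D_1$ is an integral projective variety, so $\dim\mathcal N_{D_1}<d$ and $vol(\mathscr L)$ exists by \cref{Theorem14a}. For $a\ge 2$ the scheme $D=aD_1$ is irreducible, so I would apply \cref{prop: all_depends_on _volume_of_nilradical}; its hypothesis becomes tractable because on the factorial scheme $X$ the prime Cartier divisor $D_1$ has $\mathcal O_X(-D_1)$ as its (radical) ideal sheaf, whence the nilradical of $D$ is
$$\mathcal N_D=\mathcal O_X(-D_1)/\mathcal O_X(-aD_1)=\mathcal O_X(-D_1)\otimes_{\mathcal O_X}\mathcal O_{(a-1)D_1},$$
an \emph{invertible} sheaf on the lower-multiplicity divisor $(a-1)D_1$. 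Hence for every invertible sheaf $\mathscr M$ on $D$ the twist $\mathcal N_D\otimes\mathscr M$ is again invertible on $(a-1)D_1$, and since global sections over $D$ of a sheaf supported on $(a-1)D_1$ agree with global sections over $(a-1)D_1$, we obtain $vol_{\mathscr L}(\mathcal N_D\otimes\mathscr M)=vol_{\mathscr L|_{(a-1)D_1}}\big((\mathcal O_X(-D_1)\otimes\mathscr M)|_{(a-1)D_1}\big)$; note that $\dim(a-1)D_1=d$, so the normalization is unchanged. By the inductive hypothesis $vol(\mathscr L|_{(a-1)D_1})$ exists as a limit, so \cref{lem: tensoring_invertible_preserves_limit} shows this quantity exists. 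Thus the hypothesis of \cref{prop: all_depends_on _volume_of_nilradical} is verified and $vol(\mathscr L)$ exists.

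For the general case I would induct on $r$, the case $r=1$ being the previous paragraph. If $r\ge 2$, set $D'=a_1D_1$ and $D''=\sum_{i\ge 2}a_iD_i$, two effective divisors with no common component. In the regular (hence factorial) local rings of $X$ their local equations $f,g$ are coprime, so $\mathcal O_X(-D')\cap\mathcal O_X(-D'')=\mathcal O_X(-D)$ and we obtain the Mayer--Vietoris sequence
$$0\to\mathcal O_D\to\mathcal O_{D'}\oplus\mathcal O_{D''}\to\mathcal O_{D'\cap D''}\to 0$$
of $\mathcal O_D$-modules, which remains exact after tensoring with the line bundle $\mathscr L^n$. Because $D_1\ne D_i$ for $i\ge 2$, the intersection $D'\cap D''$ has dimension $\le d-1$, so its contribution is negligible. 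The $\mathscr L$-volume of $\mathcal O_{D'}\oplus\mathcal O_{D''}$ equals $vol(\mathscr L|_{D'})+vol(\mathscr L|_{D''})$, where the first summand exists by the irreducible-support case and the second by the inductive hypothesis (as $D''$ has $r-1$ components). Applying \cref{lem: existence_limits_SES} with the low-dimensional term $\mathcal O_{D'\cap D''}$ then gives that $vol(\mathscr L)$ exists, completing the induction.

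The main obstacle is the non-reduced regime $a\ge 2$, where \cref{Theorem14a} no longer applies. The decisive point is the identification of the nilradical of $aD_1$ with a line bundle on $(a-1)D_1$: this is what both reduces the hypothesis of \cref{prop: all_depends_on _volume_of_nilradical} to a genuine volume on a smaller divisor and drives the induction on $a$. By contrast, the reduction to irreducible support is routine; its only subtle ingredient is that factoriality of the local rings of the nonsingular $X$ is precisely what makes $\mathcal O_X(-D')\cap\mathcal O_X(-D'')=\mathcal O_X(-D)$ and confines the overlap $D'\cap D''$ to dimension $d-1$.
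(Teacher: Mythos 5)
Your proof is correct and is essentially the paper's own argument: the same induction on the multiplicity $a$, resting on the identification of the nilradical of $aD_1$ with the invertible $\mathcal{O}_{(a-1)D_1}$-module $\mathcal{O}_X(-D_1)\otimes\mathcal{O}_{(a-1)D_1}$, on \cref{prop: all_depends_on _volume_of_nilradical} together with \cref{lem: tensoring_invertible_preserves_limit}, followed by a reduction of the general case to the irreducible one using factoriality of the local rings of $X$. The only (cosmetic) difference is in that reduction: the paper compares $\mathcal{O}_D$ with $\bigoplus_i \mathcal{O}_{a_iE_i}$ all at once via a four-term exact sequence whose kernel is shown to vanish and whose cokernel is supported in dimension $\le d-1$, whereas you peel off one component at a time with a Mayer--Vietoris short exact sequence and induct on the number of components; both steps rest on exactly the same UFD/coprimality and dimension-of-overlap facts.
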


We begin by proving this result in the special case where $D$ has a single irreducible component.

\begin{proposition} \label{prop: exitance_limits_irreducible_codimension_1}

Let $X$ be a nonsingular projective variety over an arbitrary field and let $E$ be a prime divisor on $X$. For $m \in \mathbb{N}^{+}$, let $mE$ be the closed subscheme of $X$ with ideal sheaf $\mathcal{O}_{X}(-mE)$. Let $\mathscr{L}$ be an invertible sheaf on $mE$. Then, the volume of $\mathscr{L}$ exists as a limit.

\end{proposition}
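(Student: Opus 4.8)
The plan is to reduce the study of an arbitrary invertible sheaf $\mathscr{L}$ on the nonreduced scheme $mE$ to a statement about its nilradical, and then to analyze the nilradical using the structure of the local ring of $X$ along the prime divisor $E$. The scheme $mE$ is irreducible with reduced subscheme $(mE)_{red}=E$, and its nilradical is $\mathcal{N}=\mathcal{O}_X(-E)/\mathcal{O}_X(-mE)$ as a sheaf on $mE$. By \cref{prop: all_depends_on _volume_of_nilradical}, it suffices to prove that $vol_{\mathscr{L}}(\mathcal{N}\otimes\mathscr{M})$ exists as a limit for every invertible sheaf $\mathscr{M}$ on $mE$. So the real content is to control the growth of $h^0(mE,\mathcal{N}\otimes\mathscr{M}\otimes\mathscr{L}^n)$.

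The key structural observation is that $\mathcal{N}$ is filtered by the powers of the ideal $\mathcal{O}_E(-E)=\mathcal{O}_X(-E)/\mathcal{O}_X(-2E)$, giving successive quotients $\mathcal{O}_X(-iE)/\mathcal{O}_X(-(i+1)E)\cong\mathcal{O}_E\otimes\mathcal{O}_X(-iE)$ for $i=1,\dots,m-1$. Because $X$ is nonsingular and $E$ is a prime divisor, $E$ is a Cartier divisor and these conormal-type quotients are invertible sheaves on the \emph{integral} scheme $E$. Concretely, I would build the filtration
$$
\mathcal{N}=\mathcal{N}_1\supset\mathcal{N}_2\supset\cdots\supset\mathcal{N}_m=0,
\qquad \mathcal{N}_i=\mathcal{O}_X(-iE)/\mathcal{O}_X(-mE),
$$
so that each graded piece $\mathcal{N}_i/\mathcal{N}_{i+1}$ is an invertible sheaf on $E$. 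Tensoring the resulting short exact sequences with $\mathscr{M}\otimes\mathscr{L}^n$ and taking cohomology, the additivity of $h^0$ up to lower-order error reduces the existence of $vol_{\mathscr{L}}(\mathcal{N}\otimes\mathscr{M})$ to the existence of $vol$ for each invertible graded piece on $E$.

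Now $E$ is a prime divisor on a nonsingular variety, hence an integral (in particular, reduced and irreducible) projective scheme, so $\dim\mathcal{N}_E<\dim E$ vacuously and Theorem \ref{Theorem14a} applies directly: the volume of any invertible sheaf on $E$ exists as a limit. Therefore each graded piece $\mathcal{N}_i/\mathcal{N}_{i+1}$, being of the form $\mathcal{O}_E\otimes(\text{invertible})$, has a volume that exists as a limit with respect to the restriction of $\mathscr{L}$. Here I must be careful that $\mathscr{L}$ restricted to $E$ and the various twists by $\mathscr{M}$ and $\mathcal{O}_X(-iE)|_E$ are all genuine invertible sheaves on $E$ of the same top dimension $d=\dim E=\dim mE$, so that the limits add up and \cref{lem: existence_limits_SES} can be applied repeatedly without losing the leading term.

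The main obstacle I anticipate is bookkeeping the tensor products correctly on the nonreduced scheme: $\mathscr{L}$ and $\mathscr{M}$ live on $mE$, whereas the graded pieces are sheaves on $E$, so I must verify that tensoring with $\mathscr{L}^n$ commutes with restriction in the sense that $(\mathcal{N}_i/\mathcal{N}_{i+1})\otimes\mathscr{L}^n\cong(\mathcal{N}_i/\mathcal{N}_{i+1})\otimes_{\mathcal{O}_E}(\mathscr{L}|_E)^n$, which holds because $\mathcal{N}_i/\mathcal{N}_{i+1}$ is annihilated by the ideal of $E$. Once this identification is secure, each graded piece is an invertible sheaf on $E$ twisted by $(\mathscr{L}|_E)^n$, its volume exists by Theorem \ref{Theorem14a}, and assembling the filtration via \cref{lem: existence_limits_SES} yields the existence of $vol_{\mathscr{L}}(\mathcal{N}\otimes\mathscr{M})$ for every $\mathscr{M}$. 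Invoking \cref{prop: all_depends_on _volume_of_nilradical} then completes the proof.
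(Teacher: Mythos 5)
Your reduction via \cref{prop: all_depends_on _volume_of_nilradical} to the existence of $vol_{\mathscr{L}}(\mathcal{N}\otimes\mathscr{M})$ matches the paper, and your structural claims about the filtration $\mathcal{N}_i=\mathcal{O}_X(-iE)/\mathcal{O}_X(-mE)$ are correct. The gap is the step where you pass from the graded pieces back to $\mathcal{N}\otimes\mathscr{M}$: the claimed ``additivity of $h^0$ up to lower-order error'' is false in this setting, and \cref{lem: existence_limits_SES} cannot be invoked, because every term of every short exact sequence in your filtration (sub, middle, and quotient) is supported on $E$, i.e.\ in dimension exactly $d=\dim mE$, whereas that lemma requires one of the three sheaves to be supported in dimension strictly less than $d$. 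Concretely, twisting $0\to\mathcal{N}_{i+1}\to\mathcal{N}_i\to\mathcal{N}_i/\mathcal{N}_{i+1}\to 0$ by $\mathscr{M}\otimes\mathscr{L}^n$ and taking cohomology gives only $h^0(\mathcal{N}_i\otimes\mathscr{M}\otimes\mathscr{L}^n)=h^0(\mathcal{N}_{i+1}\otimes\mathscr{M}\otimes\mathscr{L}^n)+\dim W_n$, where $W_n$ is the image of the map to $H^0$ of the quotient; $W_n$ can be any subspace of a space whose dimension grows like $n^d$, so knowing that the limits exist for the sub and the quotient says nothing about the middle term. Controlling such images of restriction maps is precisely the hard point of the whole subject (it is why limits can fail to exist on nonreduced schemes at all, cf.\ Theorem \ref{TheoremI1}): in \cref{prop: all_depends_on _volume_of_nilradical} the images $L_n$ at least form a graded linear series, because $\mathcal{O}_X\to\mathcal{O}_{X_{red}}$ is a ring map, so Theorem \ref{KIthm} plus the Case 1/Case 2 index analysis applies; for your sequences with $i\ge 2$ the spaces $W_n$ carry no multiplicative structure, so not even that tool is available.

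The paper avoids the filtration entirely by inducting on $m$: since $X$ is nonsingular, $\mathcal{N}=\mathcal{O}_X(-E)\otimes\mathcal{O}_{(m-1)E}$ is a single \emph{invertible} $\mathcal{O}_{(m-1)E}$-module, so $\mathcal{N}\otimes\mathscr{M}$ is too. By the inductive hypothesis $vol(\mathscr{L}|_{(m-1)E})$ exists as a limit, and \cref{lem: tensoring_invertible_preserves_limit} (whose proof uses exact sequences whose error sheaves genuinely have lower-dimensional support) gives that $vol_{\mathscr{L}|_{(m-1)E}}(\mathcal{N}\otimes\mathscr{M})$ exists as a limit; the identification $H^0(mE,\mathcal{N}\otimes\mathscr{M}\otimes\mathscr{L}^n)=H^0((m-1)E,\mathcal{N}\otimes\mathscr{M}\otimes(\mathscr{L}|_{(m-1)E})^n)$ then supplies exactly what \cref{prop: all_depends_on _volume_of_nilradical} needs. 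Your first filtration step, $\mathcal{N}$ viewed as a sheaf on $(m-1)E$, is precisely this observation: the fix for your argument is to stop the filtration there and induct on $m$, rather than descending all the way to invertible sheaves on $E$.
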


\begin{proof}

We proceed by induction on $m$, where the case $m=1$ follows from \cite[Proposition 8.1]{Cut01} since $E$ is a projective variety. Fix some $m >1$. Let us begin by noticing that since $X$ is nonsingular, the nilradical $\mathcal{N}$ of the scheme $mE$ is equal to $\mathcal{O}_{X}(-E) \otimes \mathcal{O}_{(m-1)E} $. Thus, we can regard $\mathcal{N}$ as an invertible $\mathcal{O}_{(m-1)E}$-module. 

By \cref{prop: all_depends_on _volume_of_nilradical}, it is enough to prove that $vol_{\mathscr{L}}(\mathcal{N} \otimes \mathscr{M})$ exists as a limit for any invertible sheaf $\mathscr{M}$ on $mE$.  By induction, $vol(\mathscr{L}|_{(m-1)E})$ exists as a limit. Furthermore, the sheaf $\mathcal{N} \otimes \mathscr{M}$ is an invertible $\mathcal{O}_{(m-1)E}$-module. By \cref{lem: tensoring_invertible_preserves_limit}, 

$$vol_{\mathscr{L}|_{(m-1)E}}\left(\mathcal{N} \otimes \mathscr{M}\right) = vol(\mathscr{L}|_{(m-1)E})$$

\noindent and these volumes exist as limits. Now, for any $n \in \mathbb{N}$, since $\mathcal{N}$ is both an $\mathcal{O}_{mE}$-module and an $\mathcal{O}_{(m-1)E}$-module, we have that 

$$ H^{0}\left( mE, \mathcal{N} \otimes \mathscr{M} \otimes \mathscr{L}^{n} \right) = H^{0}\left((m-1)E, \mathcal{N} \otimes \mathscr{M} \otimes \left( \mathscr{L}|_{(m-1)E}\right)^n \right) $$

\noindent and hence the limit

$$vol_{\mathscr{L}}(\mathcal{N} \otimes \mathscr{M})= \lim \limits_{n \to \infty} \dfrac{H^{0}\left( mE, \mathcal{N} \otimes \mathscr{M} \otimes \mathscr{L}^{n} \right)}{n^d} = vol_{\mathscr{L}|_{(m-1)E}}\left(\mathcal{N} \otimes \mathscr{M}\right)$$

\noindent exists. Moreover, we see that

$$vol_{\mathscr{L}}(\mathcal{N} \otimes \mathscr{M})= vol(\mathscr{L}|_{(m-1)E}).$$

\end{proof}

\begin{proof}[Proof of \cref{prop: existence_limits_pure_codimension_1}]

Consider an arbitrary effective divisor $D=\sum_{i=1}^{t} a_i E_i$ on $X$. We have an exact sequence 

\begin{equation}\label{eqL1}
0 \rightarrow \mathcal{K} \rightarrow \mathcal{O}_{D} \rightarrow \displaystyle \bigoplus_{i=1}^{t} \mathcal{O}_{a_iE_i} \rightarrow \mathcal{C} \rightarrow 0.
\end{equation}

\noindent We claim that $\mathcal{K}=0$ and $C$ is supported in dimension at most $d-1$. Take $p \in D$ and let $f_i$ be a local equation for $E_i$ at $p$. The stalk at $p$ of the map $\mathcal{O}_{D} \rightarrow \oplus_i \mathcal{O}_{a_iE_i}$ is 

$$ \dfrac{\mathcal{O}_{X,p}}{(f_1^{a_1} \cdots f_t^{a_t})} \rightarrow \displaystyle \bigoplus_{i=1}^{t} \dfrac{\mathcal{O}_{X,p}}{(f_i^{a_i})},$$

\noindent which is an injection since $\mathcal{O}_{X,p}$ is a UFD and the $f_i$ are irreducible elements. Further, let $p \in E_j \setminus \cup_{i \neq j} E_{i}$. Then, except for $f_j$, all the $f_i$ become units in $\mathcal{O}_{X,p}$ and hence the map $\mathcal{O}_{D,p} \rightarrow \oplus_i \mathcal{O}_{a_iE_i,p}$ is an isomorphism. Thus, $supp(\mathcal{C}) \subset \cup_{i \neq j} (E_i \cap E_j)$. Each volume $vol(\mathscr{L}|_{a_iE_i})$ exists as a limit by \cref{prop: exitance_limits_irreducible_codimension_1} and so $vol(\mathscr{L} \otimes \left(\oplus_i\mathcal{O}_{a_iE_i} \right)) = \sum_{i} vol(\mathscr{L}|_{a_iE_i})$ exists as a limit. Thus we can apply \cref{lem: existence_limits_SES} to the exact sequence (\ref{eqL1}) to see that

$$ vol(\mathscr{L}) = \sum_{i=1}^{t} vol(\mathscr{L}|_{a_iE_i})$$

\noindent exists as a limit.

\end{proof}

We are now ready to give the proof of \cref{thm: existence_limits_codimension1_in_Nonsingular}. 
Recall that $X$ is a nonsingular projective variety over an arbitrary field and $Y$ is a closed subscheme of $X$ of codimension 1. We have that $\dim Y =d$ so that $\dim X=d+1$.

\begin{proof}[Proof of \cref{thm: existence_limits_codimension1_in_Nonsingular}]

Let $\mathcal{I}_{Y} \subset \mathcal{O}_{X}$ be the ideal sheaf of $Y$. By \cref{lem: factorization_of_ideal_sheaves}, $\mathcal{I}_{Y}=\mathcal{O}_{X}(-D)\mathcal{J}$ where $D>0$ and $codim\left(supp\left(\mathcal{O}_{X}/\mathcal{J} \right)\right) \geq 2$. The natural morphism of $\mathcal{O}_{Y}$-modules

$$ \dfrac{\mathcal{O}_{X}}{\mathcal{O}_{X}(-D)\mathcal{J}} \rightarrow \dfrac{\mathcal{O}_{X}}{\mathcal{O}_{X}(-D)} \oplus \dfrac{\mathcal{O}_{X}}{\mathcal{J}}$$

\noindent is an isomorphism away from the support of $\mathcal{O}_{X}/\mathcal{J}$. Moreover, we have that $vol_{\mathscr{L}}(\mathcal{O}_{X}/\mathcal{O}_{X}(-D)) = vol(\mathscr{L}|_{D})$ exists as a limit by \cref{prop: existence_limits_pure_codimension_1} and $vol_{\mathscr{L}}(\mathcal{O}_{X}/\mathcal{J}) = 0$ by \cite[Proposition 1.31]{Deb}. Thus, by \cref{cor: existence_limits_isomorphism_away_from_closed_sets}, the volume of $\mathscr{L}$ exists as a limit and

$$ vol(\mathscr{L}) = vol(\mathscr{L}|_{D}). $$

\end{proof}

\end{document}